
\documentclass[final,leqno]{siamltex704}
\usepackage{amsmath}
\usepackage{amssymb}
\usepackage{graphicx}
\usepackage[notcite,notref]{showkeys}

\newtheorem{remark}{Remark}[section]

\setlength{\parskip}{1\parskip}

\newcommand{\bq}{{\bf q}}
\newcommand{\vq}{{\bf q}}
\newcommand{\bw}{{\bf w}}

\def\T{{\mathcal T}}
\def\E{{\mathcal E}}

\def\bn{{\bf n}}
\def\vn{{\bf n}}

\def\bq{{\bf q}}

\def\ljump{{[\![}}
\def\rjump{{]\!]}}

\def\3bar{{|\hspace{-.02in}|\hspace{-.02in}|}}

\newcommand{\ldp}{{(\hspace{-.03in}(}}
\newcommand{\rdp}{{)\hspace{-.03in})}}
\newcommand{\bP}{{\bf P}}
\newcommand{\bPi}{\boldsymbol{\Pi}}
\newcommand{\vchi}{\boldsymbol{\chi}}

\title{A weak Galerkin mixed finite element method for biharmonic equations }
\author{Lin Mu\thanks{Department of
Mathematics, Michigan State University, East Lancing, MI 48824
(lxmu@ualr.edu).}\and Junping Wang\thanks{Division of Mathematical
Sciences, National Science Foundation, Arlington, VA 22230
(jwang@\break nsf.gov). The research of Wang was supported by the
NSF IR/D program, while working at the Foundation. However, any
opinion, finding, and conclusions or recommendations expressed in
this material are those of the author and do not necessarily reflect
the views of the National Science Foundation.} \and Yanqiu
Wang\thanks{Department of Mathematics, Oklahoma State University,
Stillwater, OK 74075 (yqwang@math.okstate.edu).} \and Xiu
Ye\thanks{Department of Mathematics, University of Arkansas at
Little Rock, Little Rock, AR 72204 (xxye@ualr.edu). This research
was supported in part by National Science Foundation Grant
DMS-1115097.}}
\begin{document}
\maketitle

\begin{abstract}
This article introduces and analyzes a weak Galerkin mixed finite
element method for solving the biharmonic equation. The weak
Galerkin method, first introduced by two of the authors (J. Wang and
X. Ye) in \cite{WangYe_PrepSINUM_2011} for second order elliptic
problems, is based on the concept of {\em discrete weak gradients}.
The method allows the use of completely discrete finite element
functions on partitions of arbitrary polygon or polyhedron. In this
article, the weak Galerkin method is applied to discretize the
Ciarlet-Raviart mixed formulation for the biharmonic equation. In
particular, an a priori error estimation is given for the
corresponding finite element approximations. The error analysis
essentially follows the framework of Babu\u{s}ka, Osborn, and
Pitk\"{a}ranta \cite{Babuska80} and uses specially designed
mesh-dependent norms. The proof is technically tedious due to the
discontinuous nature of the weak Galerkin finite element functions.
Some computational results are presented to demonstrate the
efficiency of the method.
\end{abstract}

\begin{keywords}
Weak Galerkin finite element methods,  discrete gradient, biharmonic
equations, mixed finite element methods.
\end{keywords}

\begin{AMS}
Primary, 65N15, 65N30.
\end{AMS}
\pagestyle{myheadings}

\section{Introduction} In this paper, we are concerned with
numerical methods for the following biharmonic equation with clamped
boundary conditions
\begin{equation} \label{pde}
\begin{aligned}
\Delta^2 u&= f\qquad
\mbox{in}\;\Omega,\\
u&=0\qquad \mbox{on}\; \partial\Omega,\\
\frac{\partial u}{\partial\bn}&=0 \qquad \mbox{on}\; \partial\Omega,
\end{aligned}
\end{equation}
where $\Omega$ is a polygonal or polyhedral domain in
$\mathbb{R}^d\; (d=2,3)$. To solve the problem (\ref{pde}) using a
primal-based conforming finite element method, one would need $C^1$
continuous finite elements, which usually involve large degree of
freedoms and hence can be computationally expensive. There are
alternative numerical methods, for example, by using either
nonconforming elements \cite{Adini, Lascaux, Morley}, the $C^0$
discontinuous Galerkin method \cite{Engel02, BrennerSung},  or mixed
finite element methods \cite{Behrens11, bf, Ciarlet74, Cockburn09,
Gudi08, Herrmann67, Herrmann, Johnson73, Johnson82, Malkus78,
Miyoshi73}. One of the earliest mixed formulation proposed for
(\ref{pde}) is the Ciarlet-Raviart mixed finite element formulation
\cite{Ciarlet74} which decomposes (\ref{pde}) into a system of
second order partial differential equations. More precisely, in this
formulation, one introduces a dual variable $w=-\Delta u$ and
rewrites the four-order biharmonic equation into two coupled second
order equations
\begin{equation}\label{w-mix}
\begin{cases}
w + \Delta u = 0, \\
-\Delta w = f,
\end{cases}
\end{equation}
In \cite{Ciarlet74}, the above system of second order equations is
discretized by using the standard $H^1$ conforming elements.
However, only sub-optimal error estimates are proved in
\cite{Ciarlet74} for quadratic or higher order of elements. Improved
error estimates have been established in \cite{Babuska80, Falk80,
Glowinski79, Scholz78} for quadratic or higher order of elements. In
\cite{Babuska80}, Babu\u{s}ka, Osborn and Pitk\"{a}ranta pointed out
that a suitable choice of norms are $L^2$ for $w$ and $H^2$ for $u$,
or equivalent, in order to use the standard LBB stability analysis.
In this sense, one has ``optimal'' order of convergence in $H^2$
norm for $u$ and in $L^2$ norm for $w$, for quadratic or higher
order of elements. However, when equal order approximation is used
for both $u$ and $w$, the ``optimal'' order of error estimate is
restricted by the interpolation error in $H^2$ norm, and thus may
not be really optimal. Moreover, this standard technique does not
apply to the piecewise linear discretization, since in this case the
interpolation error can not even be measured in $H^2$ norm. A
solution to this has been proposed by Scholz \cite{Scholz78}. Using
an $L^{\infty}$ argument, Scholz was able to improve the convergence
rate in $L^2$ norm for $w$ by $h^{\frac12}$, and this theoretical
result is known to be sharp. Also, Scholz's proof works for all
equal-order elements including piecewise linears.

The goal of this paper is to propose and analyze a weak Galerkin
discretization method for the mixed formulation (\ref{w-mix}). The
weak Galerkin method was recently introduced in
\cite{WangYe_PrepSINUM_2011} for second order elliptic equations. It
is an extension of the standard Galerkin finite element method where
classical derivatives were substituted by weakly defined derivatives
on functions with discontinuity. Optimal order of a priori error
estimates has been observed and established for various weak
Galerkin discretization schemes for second order elliptic equations
\cite{WangYe_PrepSINUM_2011, wy-mixed, mwy-wg-stabilization}. A
numerical implementation of weak Galerkin was discussed in
\cite{MuWangWangYe, mwy-wg-stabilization} for some model problems.

Applying the weak Galerkin method to both second-order equations in
(\ref{w-mix}) appears to be trivial and straight-forward at first
glance. However, the application turns out to be much more
complicated than simply combining one weak Galerkin scheme with
another one. The application is particularly non-trivial in the
mathematical theory on error analysis. In deriving an a priori error
estimate, we follow the framework as developed in \cite{Babuska80}
by using mesh-dependent norms. Many commonly used properties and
inequalities for standard Galerkin finite element method need to be
re-derived for weak Galerkin methods with respect to the
mesh-dependent norms.  Due to the discrete nature of the weak
Galerkin functions, technical difficulties arise in the derivation
of inequalities or estimates. The technical estimates and tools that
we have developed in this paper should be essential to the analysis
of weak Galerkin methods for other type of modeling equations. They
should also play an important role in future developments of
preconditioning techniques for weak Galerkin methods. Therefore, we
believe this paper provides useful technical tools for future
research, in addition to introducing an efficient new method for
solving biharmonic equations.

The paper is organized as follows. In Section
\ref{sec:weakGalerkin}, a weak Galerkin discretization scheme for
the Ciarlet-Raviart mixed formulation of the biharmonic equation is
introduced and proved to be well-posed. Section \ref{sec:tools} is
dedicated to defining and analyzing several technical tools,
including projections, mesh-dependent norms and some estimates. With
the aid of these tools, an error analysis is presented in Section
\ref{sec:erroranalysis}. Finally, in Section \ref{sec:numerical}, we
report some numerical results that show the efficiency of the
method.

\section{A Weak Galerkin Finite Element Scheme} \label{sec:weakGalerkin}

For illustrative purpose, we consider only the two-dimensional case
of (\ref{pde}) and the corresponding weak Galerkin method will be
based on a shape-regular triangulation of the domain $\Omega$.

Let $D\subseteq\Omega$ be a polygon, we use the standard definition
of Sobolev spaces $H^s(D)$ and $H_0^s(D)$ with $s\ge 0$ (e.g., see
\cite{adams, ciarlet} for details). The associated inner product,
norm, and semi-norms in $H^s(D)$ are denoted by
$(\cdot,\cdot)_{s,D}$, $\|\cdot\|_{s,D}$, and $|\cdot|_{r,D}, 0\le r
\le s$, respectively. When $s=0$, $H^0(D)$ coincides with the space
of square integrable functions $L^2(D)$. In this case, the subscript
$s$ is suppressed from the notation of norm, semi-norm, and inner
products. Furthermore, the subscript $D$ is also suppressed when
$D=\Omega$. For $s<0$, the space $H^s(D)$ is defined to be the dual
of $H_0^{-s}(D)$.

Occasionally, we need to use the more general Sobolev space
$W^{s,p}(\Omega)$, for $1\le p\le \infty$, and its norm
$\|\cdot\|_{W^{s,p}(\Omega)}$. The definition simply follows the
standard one given in \cite{adams, ciarlet}. When $s=0$, the space
$W^{s,p}(\Omega)$ coincides with $L^p(\Omega)$.

The above definition/notation can easily be extended to
vector-valued and matrix-valued functions. The norm, semi-norms, and
inner-product for such functions shall follow the same naming
convention. In addition, all these definitions can be transferred
from a polygonal domain $D$ to an edge $e$, a domain with lower
dimension. Similar notation system will be employed. For example,
$\|\cdot\|_{s,e}$ and $\|\cdot\|_e$ would denote the norm in
$H^s(e)$ and $L^2(e)$ etc. We also define the $H(div)$ space as
follows
$$
H(div,\Omega) = \{\vq:\ \vq \in [L^2(\Omega)]^2,\: \nabla\cdot\vq\in
L^2(\Omega)\}.
$$

Using notations defined above, the variational form of the
Ciarlet-Raviart mixed formulation (\ref{w-mix}) seeks $u\in
H_0^1(\Omega)$ and $w\in H^1(\Omega)$ satisfying
\begin{equation}\label{mixed}
\begin{cases}
  (w, \phi) - (\nabla u, \nabla \phi) =  0\qquad &\textrm{for all } \phi\in H^1(\Omega), \\
  (\nabla w, \nabla\psi) = (f, \psi) \qquad &\textrm{for all } \psi\in H_0^1(\Omega).
\end{cases}
\end{equation}
For any solution $w$ and $u$ of (\ref{mixed}), it is not hard to see
that $w=-\Delta u$. In addition, by choosing $\phi=1$ in the first
equation of (\ref{mixed}), we obtain
$$
\int_{\Omega} w \, dx = 0.
$$
Define $\bar{H}^1(\Omega) \subset H^1(\Omega)$ by
$$
\bar{H}^1(\Omega) = \{v:\ v\in H^1(\Omega),\, \int_{\Omega} v\, dx =
0 \},
$$
which is a subspace of $H^1(\Omega)$ with mean-value free functions.
Clearly, the solution $w$ of (\ref{mixed}) is a function in
$\bar{H}^1(\Omega)$.

One important issue in the analysis is the regularity of the
solution $u$ and $w$. For two-dimensional polygonal domains, this
has been thoroughly discussed in \cite{blum80}. According to their
results, the biharmonic equation with clamped boundary condition
(\ref{pde}) satisfies
\begin{equation} \label{eq:regularity}
  \|u\|_{4-k} \le c\|f\|_{-k},
\end{equation}
where $c$ is a constant depending only on the domain $\Omega$. Here
the parameter $k$ is determined by
$$
\begin{aligned}
  k=1 \quad&\textrm{if all internal angles of } \Omega \textrm{ are less than } 180^\circ \\
  k=0 \quad&\textrm{if all internal angles of } \Omega \textrm{ are less than } 126.283696\cdots^\circ \\
\end{aligned}
$$
The above regularity result indicates that the solution $u\in
H^3(\Omega)$ when $\Omega$ is a convex polygon and $f\in
H^{-1}(\Omega)$. It follows that the auxiliary variable $w\in
H^1(\Omega)$. Moreover, if all internal angles of $\Omega$ are less
than $126.283696\cdots^\circ$ and $f\in L^2(\Omega)$, then $u\in
H^4(\Omega)$ and $w\in H^2(\Omega)$. The drawback of the mixed
formulation (\ref{mixed}) is that the auxiliary variable $w$ may not
possess the required regularity when the domain is non-convex. We
shall explore other weak Galerkin methods to deal with such cases.


Next, we present the weak Galerkin discretization of the
Ciarlet-Raviart mixed formulation. Let ${\cal T}_h$ be a
shape-regular, quasi-uniform triangular mesh on a polygonal domain
$\Omega$, with characteristic mesh size $h$. For each triangle $K\in
{\cal T}_h$, denote by $K_0$ and $\partial K$ the interior and the
boundary of $K$, respectively. Also denote by $h_K$ the size of the
element $K$. The boundary $\partial K$ consists of thee edges.
Denote by $\E_h$ the collection of all edges in ${\cal T}_h$. For
simplicity of notation, throughout the paper, we use ``$\lesssim$''
to denote ``less than or equal to up to a general constant
independent of the mesh size or functions appearing in the
inequality''.

Let $j$ be a non-negative integer. On each $K\in {\cal T}_h$, denote
by $P_j(K_0)$ the set of polynomials with degree less than or equal
to $j$. Likewise, on each $e\in \E_h$, $P_j(e)$ is the set of
polynomials of degree no more than $j$. Following
\cite{WangYe_PrepSINUM_2011}, we define a weak discrete space on
mesh $\T_h$ by
$$
V_{h} =  \{v:\:  v|_{K_0}\in P_j(K_0),\ K\in \T_h;
 \ v|_e\in P_j(e), e\in \E_h\}.
$$
Observe that the definition of $V_h$ does not require any continuity
of $v\in V_h$ across the interior edges. A function in $V_h$ is
characterized by its value on the interior of each element plus its
value on the edges/faces. Therefore, it is convenient to represent
functions in $V_h$ with two components, $v=\{v_0, v_b\}$, where
$v_0$ denotes the value of $v$ on all $K_0$ and $v_b$ denotes the
value of $v$ on $\E_h$.

We further define an $L^2$ projection from $H^1(\Omega)$ onto $V_h$
by setting $Q_h v \equiv \{Q_0 v,\, Q_b v\}$, where $Q_0 v|_{K_0}$
is the local $L^2$ projection of $v$ in $P_j(K_0)$, for $K\in\T_h$,
and $Q_b v|_e$ is the local $L^2$ projection in $P_j(e)$, for $e\in
\E_h$. To take care of the homogeneous Dirichlet boundary condition,
define
$$
V_{0,h} = \{v\in V_h\: :\: v=0\textrm{ on } \E_h\cap\partial\Omega\}.
$$
It is not hard to see that the $L^2$ projection $Q_h$ maps $H_0^1(\Omega)$ onto $V_{0,h}$.

The weak Galerkin method seeks an approximate
solution $[u_h; \, w_h]\in V_{0,h}\times V_h$ to the mixed form of
the biharmonic problem (\ref{w-mix}). To this end, we first
introduce a discrete $L^2$-equivalent inner-product and a discrete
gradient operator on $V_h$. For any $v_h=\{v_0, v_b\}$ and
$\phi_h=\{\phi_0, \phi_b\}$ in $V_h$, define an inner-product as
follows
$$
\ldp v_h, \phi_h\rdp \triangleq \sum_{K\in \T_h}(v_0, \phi_0)_K +  \sum_{K\in \T_h} h_K \langle v_0-v_b, \phi_0-\phi_b\rangle_{\partial K}.
$$
It is not hard to see that $\ldp v_h, v_h\rdp = 0$ implies
$v_h\equiv 0$. Hence, the inner-product is well-defined. Notice that
the inner-product $\ldp\cdot,\cdot\rdp$ is also well-defined for any
$v\in H^1(\Omega)$ for which $v_0=v$ and $v_b|_e=v|_e$ is the trace
of $v$ on the edge $e$. In this case, the inner-product
$\ldp\cdot,\cdot\rdp$ is identical to the standard $L^2$
inner-product.

The discrete gradient operator is defined element-wise on each $K\in
\T_h$. To this end, let $RT_j(K)$ be a space of Raviart-Thomas
element \cite{rt} of order $j$ on triangle $K$. That is,
$$
RT_j(K) = (P_j(K))^2 + \mathbf{x} P_j(K).
$$
The degrees of freedom of $RT_j(K)$ consist of moments of normal
components on each edge of $K$ up to order $j$, plus all the moments
in the triangle $K$ up to order $(j-1)$. Define
$$
\Sigma_h = \{\vq\in (L^2(\Omega))^2:\: \vq|_K \in RT_j(K),\ K\in
\T_h\}.
$$
Note that $\Sigma_h$ is not necessarily a subspace of
$H(div,\Omega)$, since it does not require any continuity in the
normal direction across any edge. A discrete weak gradient
\cite{WangYe_PrepSINUM_2011} of $v_h=\{v_0,v_b\}\in V_h$ is defined
to be a function $\nabla_w v_h \in \Sigma_h$ such that on each
$K\in\T_h$,
\begin{equation}\label{discrete-weak-gradient-new}
(\nabla_{d} v_h, \vq)_K = -(v_0, \nabla\cdot \vq)_K + \langle v_b, \vq\cdot\bn\rangle_{\partial K},
\quad \textrm{for all } \vq\in RT_j(K),
\end{equation}
where $\bn$ is the unit outward normal on $\partial K$. Clearly,
such a discrete weak gradient is always well-defined. Also, the
discrete weak gradient is a good approximation to the classical
gradient, as demonstrated in \cite{WangYe_PrepSINUM_2011}:

\medskip
\begin{lemma} \label{lem:assumptions}
  For any $v_h=\{v_0,\,v_b\}\in V_h$ and $K\in\T_h$, $\nabla_w v_h|_K = 0$ if and only if $v_0=v_b = constant$ on $K$.
Furthermore, for any $v\in H^{m+1}(\Omega)$, where $0\le m\le j+1$, we have
    $$\|\nabla_w (Q_h v)-\nabla v\|\lesssim h^{m}\|v\|_{m+1}.$$
\end{lemma}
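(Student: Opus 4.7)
The plan is to treat the two assertions separately, since the first is a kernel characterization of $\nabla_w$ on a single element while the second is an approximation estimate that follows once $\nabla_w Q_h$ is recognized as a standard $L^2$ projection.

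For the characterization in part one, the ``if'' direction is immediate: if $v_0=v_b=c$ on $K$, then for every $\vq\in RT_j(K)$ the divergence theorem gives
\[
-(c,\nabla\cdot\vq)_K+\langle c,\vq\cdot\bn\rangle_{\pK}=0,
\]
so $(\nabla_w v_h,\vq)_K=0$ for all test functions and $\nabla_w v_h|_K=0$. For the converse, I would integrate by parts on the right-hand side of \eqref{discrete-weak-gradient-new} (legitimate since $v_0$ is a polynomial on $K$) to rewrite the defining identity as
\[
(\nabla v_0,\vq)_K+\langle v_b-v_0,\vq\cdot\bn\rangle_{\pK}=0\qquad\text{for all }\vq\in RT_j(K).
\]
Then I would exploit the degrees of freedom of $RT_j(K)$ in two stages. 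First, restrict to $\vq\in RT_j(K)$ with $\vq\cdot\bn=0$ on $\pK$; since the interior moments against $(P_{j-1}(K))^2$ are free, I can pick such a $\vq$ whose $L^2$-projection onto $(P_{j-1}(K))^2$ equals $\nabla v_0\in (P_{j-1}(K))^2$, forcing $\|\nabla v_0\|_K^2=0$ and hence $v_0$ constant. Second, with $\nabla v_0=0$ in hand, vary the normal-trace moments on each edge (which can realize any polynomial in $P_j(e)$) to conclude $v_b-v_0=0$ on $\pK$.

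For part two, the main idea is that the weak gradient of $Q_h v$ is nothing but the $L^2$-projection of $\nabla v$ onto $RT_j(K)$. To see this, insert $Q_h v=\{Q_0 v,Q_b v\}$ into \eqref{discrete-weak-gradient-new}: because $\nabla\cdot\vq\in P_j(K)$ for $\vq\in RT_j(K)$ and $\vq\cdot\bn|_e\in P_j(e)$ for every edge $e\subset\pK$, the defining properties of the $L^2$-projections $Q_0$ and $Q_b$ yield
\[
(\nabla_w(Q_h v),\vq)_K=-(v,\nabla\cdot\vq)_K+\langle v,\vq\cdot\bn\rangle_{\pK}=(\nabla v,\vq)_K
\]
after one integration by parts. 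Thus $\nabla_w(Q_h v)|_K$ is the $L^2(K)$ projection of $\nabla v$ onto $RT_j(K)$.

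Once this identification is made, the estimate is a routine application of the Bramble--Hilbert lemma: since $(P_j(K))^2\subset RT_j(K)$, the projection error on $K$ is dominated by the error of the $L^2$-projection onto $(P_j(K))^2$, which scales as $h_K^m\|v\|_{m+1,K}$ for $0\le m\le j+1$. Summing over $K\in\T_h$ and using shape-regularity gives the claimed global bound. The only technical step where care is required is the kernel argument in part one, where one must leverage the full splitting of $RT_j(K)$ into interior and boundary degrees of freedom; everything else reduces to integration by parts plus standard polynomial approximation.
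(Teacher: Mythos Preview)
Your argument is correct. The paper itself does not prove this lemma but simply cites \cite{WangYe_PrepSINUM_2011}; however, the ingredients you use are precisely those the paper deploys elsewhere. Your commutativity identity $\nabla_w(Q_hv)|_K = $ ($L^2$-projection of $\nabla v$ onto $RT_j(K)$) is exactly equation~\eqref{commutative}, proved in the paper with the same integration-by-parts computation, and the approximation bound then follows from~\eqref{eq:bp-prop}. Your kernel argument via the splitting of $RT_j(K)$ into interior and edge degrees of freedom is the same mechanism used in the proof of Lemma~\ref{lem:discretenorm-equivalence}, where the spaces $D(j,K)$ and $D_e(j,K)$ play exactly the roles you assign them. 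So your route is the natural one and matches what the paper does implicitly; there is nothing to correct.

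One minor remark: in the step where you pick $\vq\in RT_j(K)$ with $\vq\cdot\bn=0$ on $\partial K$ and $L^2$-projection onto $(P_{j-1}(K))^2$ equal to $\nabla v_0$, it may be cleaner to phrase this as the paper does---that $D(j,K)$ is dual to $(P_{j-1}(K))^2$ under the $L^2$ pairing---so the reader sees immediately why such a $\vq$ exists. For $j=0$ this step is vacuous (since $\nabla v_0=0$ automatically) and only the edge test is needed, which you handle correctly.
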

\medskip

We are now in a position to present the weak Galerkin finite element
formulation for the biharmonic problem (\ref{w-mix}) in the mixed
form: {\it Find $u_h = \{u_0,\, u_b\}\in V_{0,h}$ and $w_h =
\{w_0,\, w_b\}\in V_h$ such that}
\begin{equation}\label{eq:wg}
\begin{cases}
\ldp w_h,\,\phi_h\rdp -(\nabla_w u_h,\, \nabla_w\phi_h)=0, \qquad &\textrm{for all }\phi_h = \{\phi_0,\, \phi_b\}\in V_h,\\
(\nabla_w w_h,\,\nabla_w\psi_h)=(f,\,\psi_0), \qquad &\textrm{for
all }\psi_h = \{\psi_0,\, \psi_b\}\in V_{0,h}.
\end{cases}
\end{equation}

\medskip
\begin{theorem}\label{unique}
The weak Galerkin finite element formulation (\ref{eq:wg}) has one
and only one solution $[u_h; w_h]$ in the corresponding finite
element spaces.
\end{theorem}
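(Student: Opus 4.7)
The plan is to exploit the fact that \eqref{eq:wg} is a square linear system on the finite-dimensional space $V_{0,h}\times V_h$, so existence and uniqueness are equivalent, and it suffices to verify that the homogeneous problem (set $f\equiv 0$) admits only the trivial solution. I would proceed by the standard energy argument adapted to the weak Galerkin inner products.

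First, assume $f=0$ and let $[u_h;w_h]$ solve \eqref{eq:wg}. Take $\phi_h=w_h\in V_h$ in the first equation to get
\begin{equation*}
\ldp w_h,w_h\rdp - (\nabla_w u_h,\nabla_w w_h)=0,
\end{equation*}
and take $\psi_h=u_h\in V_{0,h}$ (which is legal since $u_h\in V_{0,h}$) in the second equation to get $(\nabla_w w_h,\nabla_w u_h)=0$. Subtracting yields $\ldp w_h,w_h\rdp=0$. By the positive-definiteness of $\ldp\cdot,\cdot\rdp$ remarked upon right after its definition (the weighted boundary penalty together with the interior $L^2$ contribution forces the underlying pair $\{w_0,w_b\}$ to vanish), we conclude $w_h=0$.

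Substituting $w_h=0$ back into the first equation reduces it to $(\nabla_w u_h,\nabla_w\phi_h)=0$ for all $\phi_h\in V_h$. Choosing $\phi_h=u_h$ gives $\|\nabla_w u_h\|^2=0$, so $\nabla_w u_h|_K=0$ on every $K\in\T_h$. Applying Lemma~\ref{lem:assumptions}, this forces $u_0|_K=u_b|_{\partial K}=c_K$ for some element-wise constant $c_K$.

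The last and main obstacle is to upgrade ``piecewise constant'' to ``identically zero''. This is where the weak Galerkin structure matters: because $u_b$ is single-valued on each interior edge $e\in\E_h$, adjacent triangles $K$ and $K'$ sharing $e$ must have $c_K=u_b|_e=c_{K'}$. Walking across the dual graph of the connected mesh $\T_h$, I conclude that $c_K$ is in fact a single global constant $c$. Since $u_h\in V_{0,h}$, we have $u_b=0$ on boundary edges, so $c=0$ and therefore $u_h=0$. This finishes uniqueness, hence existence, for the homogeneous system, and by the standard square-system argument \eqref{eq:wg} is uniquely solvable for arbitrary $f$.
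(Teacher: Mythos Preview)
Your argument is correct and follows essentially the same route as the paper: reduce to the homogeneous system, test with $\phi_h=w_h$ and $\psi_h=u_h$ to kill the cross term and obtain $\ldp w_h,w_h\rdp=0$, then use Lemma~\ref{lem:assumptions} together with the boundary condition to force $u_h\equiv 0$. One cosmetic slip: you should say ``adding'' (or simply note that the second identity makes the cross term in the first vanish) rather than ``subtracting''; also, your explicit connectivity argument via the single-valuedness of $u_b$ is exactly what the paper compresses into the phrase ``$u_h$ must be a constant in $\Omega$.''
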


\begin{proof}
For the discrete problem arising from (\ref{eq:wg}), it suffices to
show that the solution to (\ref{eq:wg}) is trivial if $f=0$; the
existence of solution stems from its uniqueness.

Assume that $f=0$ in (\ref{eq:wg}). By taking $\phi_h=w_h$ and
$\psi_h=u_h$ in (\ref{eq:wg}) and adding the two resulting equations
together, we immediately have $\ldp w_h,\, w_h\rdp = 0$, which
implies $w_h\equiv 0$. Next, by setting $\phi_h=u_h$ in the first
equation of (\ref{eq:wg}), we arrive at $(\nabla_w u_h, \nabla_w
u_h)=0$. By using Lemma \ref{lem:assumptions}, we see that $u_h$
must be a constant in $\Omega$, which together with the fact that
$u_h=0$ on $\partial\Omega$ implies $u_h\equiv 0$ in $\Omega$. This
completes the proof of the theorem.
\end{proof}
\medskip

One important observation of (\ref{eq:wg}) is that the solution
$w_h$ has mean value zero over the domain $\Omega$, which is a
property that the exact solution $w=-\Delta u$ must possess. This
can be seen by setting $\phi_h=1$ in the first equation of
(\ref{eq:wg}), yielding
$$
(w_h, 1) = \ldp w_h, 1\rdp = (\nabla_w u_h,\nabla_w 1) = 0,
$$
where we have used the definition of $\ldp\cdot,\cdot\rdp$ and Lemma
\ref{lem:assumptions}. For convenience, we introduce a space
$\bar{V}_h \subset V_h$ defined as follows
$$
\bar{V}_h = \{v_h:\ v_h=\{v_0,v_b\}\in V_h,\, \int_{\Omega} v_0\, dx
= 0\}.
$$

\section{Technical Tools: Projections, Mesh-dependent Norms and Some Estimates} \label{sec:tools}
The goal of this section is to establish some technical results
useful for deriving an error estimate for the weak Galerkin finite
element method (\ref{eq:wg}).

\subsection{Some Projection Operators and Their Properties}
Let $\bP_h$ be the $L^2$ projection from $(L^2(\Omega))^2$ to
$\Sigma_h$, and $\bPi_h$ be the classical interpolation \cite{bf}
from $(H^{\gamma}(\Omega))^2, \gamma>\frac12,$ to $\Sigma_h$ defined
by using the degrees of freedom of $\Sigma_h$ in the usual mixed
finite element method. It follows from the definition of $\bPi_h$
that $\bPi_h \vq\in H(div,\Omega)\cap \Sigma_h$ for all $\vq\in
(H^{\gamma}(\Omega))^2$. In other words, $\bPi_h \vq$ has continuous
normal components across internal edges. It is also well-known that
$\bPi_h$ preserves the boundary condition $\vq\cdot\vn|_{\partial
\Omega} =0$, if it were imposed on $\vq$. The properties of $\bPi_h$
has been well-developed in the context of mixed finite element
methods \cite{bf, Gastaldi}. For example, for all $\vq\in
(W^{m,p}(\Omega))^2$ where $\frac12< m\le j+1$ and $2\le p\le
\infty$, we have
\begin{eqnarray} \label{eq:bpi-prop}
Q_0(\nabla\cdot\vq) &=& \nabla\cdot \bPi_h \vq, \qquad\mbox{if in addition }\vq\in H(div,\Omega),\\
\|\vq-\bPi_h \vq\|_{L^p(\Omega)}  &\lesssim& h^{m}
\|\vq\|_{W^{m,p}(\Omega)}.\label{eq:bpi-prop-2}
\end{eqnarray}
It is also well-known that for all $0\le m\le j+1$,
\begin{equation} \label{eq:bp-prop}
  \|\vq-\bP_h \vq\| \lesssim h^{m} \|\vq\|_m.
\end{equation}
Using the above estimates and the triangle inequality, one can
easily derive the following estimate
\begin{equation} \label{a2}
\|\bPi_h\nabla v-\bP_h\nabla v\| \lesssim h^{m}\|v\|_{m+1}
\end{equation}
for all $v\in H^{m+1}(\Omega)$ where $\frac{1}{2}< m \le j+1$.

\medskip
Next, we shall present some useful relations for the discrete weak
gradient $\nabla_w$, the projection operator $\bP_h$, and the
interpolation $\bPi_h$. The results can be summarized as follows.

\medskip
\begin{lemma} Let $\gamma>\frac12$ be any real number. The following results hold true.
\begin{itemize}
\item[(i)] For any $v \in H^1(\Omega)$, we have
\begin{equation} \label{commutative}
\nabla_w (Q_h v)  = \bP_h (\nabla v).
\end{equation}
\item[(ii)] For any $\bq\in  (H^\gamma(\Omega))^2\cap H(div,\Omega)$ and
$v_h=\{v_0,v_b\}\in V_{h}$, we have
\begin{equation}\label{div-q}
(\nabla\cdot\bq, \;v_0) = -(\bPi_h\bq, \;\nabla_w v_h) +
\sum_{e\in\E_h\cap\partial\Omega}
\langle(\bPi_h\bq)\cdot\bn,v_b\rangle_e.
\end{equation}
In particular, if either $v_h\in V_{0,h}$ or $\bq \cdot\bn=0$ on
$\partial\Omega$, then
\begin{equation}\label{div-q-homogeneous}
(\nabla\cdot\bq, \;v_0) = - (\bPi_h\bq, \;\nabla_w v_h).
\end{equation}
\end{itemize}
\end{lemma}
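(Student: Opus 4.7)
The plan is to reduce both identities to direct consequences of the definitions of $\nabla_w$, $Q_h$, $\bP_h$, and $\bPi_h$, together with two structural facts about the Raviart--Thomas element: $\nabla\cdot\bq \in P_j(K_0)$ and $(\bq\cdot\bn)|_e \in P_j(e)$ for every $\bq\in RT_j(K)$. No quantitative estimates are needed; the argument is essentially integration by parts combined with the commuting-diagram property (\ref{eq:bpi-prop}).

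For part (i), I would test both sides of (\ref{commutative}) on each element $K$ against an arbitrary $\bq\in RT_j(K)$. The right-hand side equals $(\nabla v,\bq)_K$ by the $L^2$ projection definition of $\bP_h$, and integration by parts rewrites this as $-(v,\nabla\cdot\bq)_K + \langle v,\bq\cdot\bn\rangle_{\partial K}$. Because $\nabla\cdot\bq$ lies in $P_j(K_0)$ and each $(\bq\cdot\bn)|_e$ lies in $P_j(e)$, the defining properties of $Q_0$ and $Q_b$ let me replace the two copies of $v$ by $Q_0 v$ and $Q_b v$ respectively without changing the inner products. The resulting expression is precisely the right-hand side of (\ref{discrete-weak-gradient-new}) applied to $Q_h v$ and tested against $\bq$, so the two sides of (\ref{commutative}) agree on each $K$.

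For part (ii), I would start from $(\bPi_h\bq,\nabla_w v_h)_K$, apply the defining identity (\ref{discrete-weak-gradient-new}) with the test function $\bPi_h\bq|_K\in RT_j(K)$, and then use the commuting property $\nabla\cdot\bPi_h\bq = Q_0(\nabla\cdot\bq)$ from (\ref{eq:bpi-prop}). Since $v_0|_{K_0}\in P_j(K_0)$, this collapses $(v_0,\nabla\cdot\bPi_h\bq)_K$ to $(v_0,\nabla\cdot\bq)_K$. Summing over $K\in\T_h$, I would invoke $\bPi_h\bq\in H(div,\Omega)$: its normal trace is single-valued across each interior edge, so the two contributions from the adjacent elements cancel against the single-valued $v_b$. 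Only the boundary edges survive, which after rearrangement yields (\ref{div-q}). The two specializations then follow immediately: if $v_h\in V_{0,h}$ then $v_b\equiv 0$ on $\E_h\cap\partial\Omega$, while if $\bq\cdot\bn=0$ on $\partial\Omega$ then $\bPi_h$ preserves this no-flux boundary condition and $(\bPi_h\bq)\cdot\bn$ also vanishes there.

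Neither step presents a genuine obstacle; the only point to keep in mind is that the hypothesis $\bq\in (H^{\gamma}(\Omega))^2$ with $\gamma>\tfrac12$ is exactly what is required for $\bPi_h\bq$ to be well defined and for all the edge integrals appearing in the argument to be meaningful. The slightly delicate aspect, more bookkeeping than real difficulty, is the cancellation of interior-edge contributions in part (ii), which relies critically on both the continuity of the normal trace of $\bPi_h\bq$ and on $v_b$ being a genuine function on $\E_h$ rather than a multi-valued trace.
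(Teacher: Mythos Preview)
Your proposal is correct and follows essentially the same approach as the paper's proof: both parts are reduced to the definition of $\nabla_w$ together with the structural facts $\nabla\cdot RT_j(K)=P_j(K_0)$, $RT_j(K)\cdot\bn|_e=P_j(e)$, the commuting identity $\nabla\cdot\bPi_h\bq=Q_0(\nabla\cdot\bq)$, and the $H(div)$-conformity of $\bPi_h\bq$ for the interior-edge cancellation. The only cosmetic difference is the direction in which you run each chain of equalities, and your version is in fact slightly more explicit about the cancellation mechanism in part (ii).
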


\begin{proof}
To prove (\ref{commutative}), we first recall the following
well-known relation \cite{bf}
$$
\nabla\cdot RT_j(K) = P_j(K_0),\qquad RT_j(K)\cdot \bn|_e = P_j(e).
$$
Thus, for any $\bw\in \Sigma_h$ and $K\in\T_h$, by the definition of
$\nabla_w$ and properties of the $L^2$ projection, we have
$$
  \begin{aligned}
    (\nabla_w Q_h v, \bw)_K &= -(Q_0 v, \nabla\cdot \bw)_K+
    \langle Q_bv, \bw\cdot\vn\rangle_{\partial K}\\
    &= -(v, \nabla\cdot \bw)_K + \langle v, \bw\cdot\vn\rangle_{\partial K}\\
    &= (\nabla v, \bw)_K\\
    &= (\bP_h\nabla v, \bw)_K,
  \end{aligned}
$$
which implies (\ref{commutative}). As to (\ref{div-q}), using the
fact that $\nabla\cdot RT_j(K) = P_j(K_0)$, the property
(\ref{eq:bpi-prop}), and the definition of $\nabla_w$ we obtain
\begin{eqnarray*}
(\nabla\cdot\bq, \;v_0)&=& (Q_0(\nabla\cdot\bq), \;v_0)=(\nabla\cdot \bPi_h\bq, \;v_0)\\
&=&-\sum_{K\in {\cal T}_h}(\bPi_h\bq, \nabla_w v_h)_K
   + \sum_{K\in {\cal T}_h} \langle v_b, \bPi_h \bq\cdot \bn\rangle_{\partial K} \\
&=&-\sum_{K\in {\cal T}_h}(\bPi_h \bq, \nabla_w v_h)_K +
\sum_{e\in\T_h\cap\partial\Omega} \langle
(\bPi_h\bq)\cdot\bn,v_b\rangle_e.
\end{eqnarray*}
This completes the proof of (\ref{div-q}). The equality
(\ref{div-q-homogeneous}) is a direct consequence of (\ref{div-q})
since the boundary integrals vanish under the given condition.
\end{proof}

\subsection{Discrete Norms and Inequalities}
Let $v_h=\{v_0, v_b\}\in V_h$. Define on each $K\in \T_h$
$$
\begin{aligned}
\| v_h \|_{0,h,K}^2 &= \|v_0\|_{0,K}^2 +  h \|v_0-v_b\|_{\partial K}^2, \\
\| v_h \|_{1,h,K}^2 &= \|v_0\|_{1,K}^2 +  h^{-1} \|v_0-v_b\|_{\partial K}^2, \\
| v_h |_{1,h,K}^2 &= |v_0|_{1,K}^2 +  h^{-1} \|v_0-v_b\|_{\partial K}^2.
\end{aligned}
$$
Using the above quantities, we define the following discrete norms
and semi-norms for the finite element space $V_h$
$$
\begin{aligned}
\| v_h \|_{0,h} &:= \left(\sum_{K\in \T_h}\| v_h \|_{0,h,K}^2
\right)^{1/2},\\
\| v_h \|_{1,h} &:= \left(\sum_{K\in \T_h}\| v_h \|_{1,h,K}^2 \right)^{1/2}, \\
| v_h |_{1,h} &:= \left(\sum_{K\in \T_h}| v_h |_{1,h,K}^2
\right)^{1/2}.
\end{aligned}
$$
It is clear that $\| v_h \|_{0,h}^2 = \ldp v_h, v_h\rdp$. Hence,
$\|\cdot \|_{0,h}$ provides a discrete $L^2$ norm for $V_h$. It is
not hard to see that $|\cdot|_{1,h}$ and $\|\cdot\|_{1,h}$ define a
discrete $H^1$ semi-norm and a norm for $V_h$, respectively. Observe
that $|v_h|_{1,h}=0$ if and only if $v_h\equiv constant$. Thus,
$|\cdot|_{1,h}$ is a norm in $V_{0,h}$ and $\bar{V}_h$.

For any $K\in \T_h$ and $e$ being an edge of $K$, the following
trace inequality is well-known
\begin{equation}\label{boundary2interior}
\|g\|_{e}^2 \lesssim  h^{-1} \|g\|_K^2 + h^{2s-1} |g|_{s,K}^2,\quad
\frac12 < s \le 1,
\end{equation}
for all $g\in H^1(K)$. Here $|g|_{s,K}$ is the semi-norm in the
Sobolev space $H^s(K)$. The inequality (\ref{boundary2interior}) can
be verified through a scaling argument for the standard Sobolev
trace inequality in $H^s$ with $s\in (\frac12, 1]$. If $g$ is a
polynomial in $K$, then we have from (\ref{boundary2interior}) and
the standard inverse inequality that
\begin{equation}\label{boundary2interior-discrete}
\|g\|_{e}^2 \lesssim  h^{-1} \|g\|_K^2.
\end{equation}

From (\ref{boundary2interior-discrete}) and the triangle inequality,
it is not hard to see that for any $v_h\in V_h$ one has
$$
\left(\sum_{K\in \T_h} (\|v_0\|_{0,K}^2 +  h \|v_b\|_{\partial K}^2) \right)^{1/2} \lesssim \|v_h\|_{0,h} \lesssim
\left(\sum_{K\in \T_h} (\|v_0\|_{0,K}^2 +  h \|v_b\|_{\partial K}^2) \right)^{1/2}.
$$
In the rest of this paper, we shall use the above equivalence
without particular mentioning or referencing.

The following Lemma establishes an equivalence between the two
semi-norms $|\cdot|_{1,h}$ and $\|\nabla_w\cdot\|$.
\medskip
\begin{lemma} \label{lem:discretenorm-equivalence}
  For any $v_h=\{v_0, v_b\}\in V_h$, we have
  \begin{equation} \label{eq:discreteh1semi}
  |v_h|_{1,h} \lesssim \|\nabla_w v_h\| \lesssim | v_h|_{1,h}.
  \end{equation}
\end{lemma}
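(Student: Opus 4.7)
The plan is to localize both norms and prove the equivalence cellwise, so that
$$
|v_h|_{1,h,K} \lesssim \|\nabla_w v_h\|_K \lesssim |v_h|_{1,h,K}
$$
for every $K\in\T_h$; summing over $K$ then yields the global estimate (\ref{eq:discreteh1semi}). The starting point is the identity obtained by applying integration by parts in (\ref{discrete-weak-gradient-new}), which is legitimate since $v_0|_K\in P_j(K)$:
$$
(\nabla_w v_h,\vq)_K = (\nabla v_0,\vq)_K - \langle v_0-v_b,\vq\cdot\bn\rangle_{\partial K}\qquad\textrm{for all }\vq\in RT_j(K).
$$
The two workhorses on the right-hand side will be the polynomial trace-inverse inequality (\ref{boundary2interior-discrete}) and a reference-element scaling estimate for $RT_j$ lifts from $\partial K$.

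For the upper bound $\|\nabla_w v_h\|_K\lesssim |v_h|_{1,h,K}$, I would simply insert $\vq=\nabla_w v_h|_K$ into the identity above and apply Cauchy--Schwarz together with (\ref{boundary2interior-discrete}) on the polynomial $\nabla_w v_h\cdot\bn$. The resulting bound carries an overall factor of $\|\nabla_w v_h\|_K$ which cancels with one copy on the left, leaving precisely $|v_0|_{1,K}+h^{-1/2}\|v_0-v_b\|_{\partial K}\lesssim |v_h|_{1,h,K}$ on the right.

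The reverse inequality will be reached by combining two independent test-function choices. First, since $\nabla v_0\in (P_{j-1}(K))^2\subset RT_j(K)$, taking $\vq=\nabla v_0$ in the identity, then dividing by $\|\nabla v_0\|_K$ and using (\ref{boundary2interior-discrete}) on $\nabla v_0\cdot\bn$, yields
$$
\|\nabla v_0\|_K\lesssim \|\nabla_w v_h\|_K + h^{-1/2}\|v_0-v_b\|_{\partial K}.
$$
Second, I construct $\vq_b\in RT_j(K)$ via the $RT_j$ degrees of freedom by prescribing $\vq_b\cdot\bn|_e=(v_0-v_b)|_e$ on each edge $e\subset\partial K$ and setting all interior moments against $(P_{j-1}(K))^2$ to zero. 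Then $(\nabla v_0,\vq_b)_K=0$ and the identity collapses to $(\nabla_w v_h,\vq_b)_K=-\|v_0-v_b\|_{\partial K}^2$. Given the scaling estimate $\|\vq_b\|_K\lesssim h^{1/2}\|v_0-v_b\|_{\partial K}$, Cauchy--Schwarz then produces
$$
h^{-1/2}\|v_0-v_b\|_{\partial K}\lesssim \|\nabla_w v_h\|_K.
$$
Substituting this into the previous display removes the boundary term on the right and gives $\|\nabla v_0\|_K\lesssim \|\nabla_w v_h\|_K$. Summing the squares of the two bounds yields $|v_h|_{1,h,K}\lesssim \|\nabla_w v_h\|_K$, as desired; the decoupled proof of the two estimates avoids any absorption/Young's inequality juggling.

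The main obstacle is the scaling estimate for $\vq_b$. The dimension count of $RT_j(K)$ (three edges each contributing $j+1$ normal moments, plus $j(j+1)$ interior moments, totalling $\dim RT_j(K)$) guarantees that $\vq_b$ is well-defined, but the bound $\|\vq_b\|_K\lesssim h^{1/2}\|v_0-v_b\|_{\partial K}$ needs to be justified by pulling $\vq_b$ back to a reference triangle via the Piola transform, invoking equivalence of all norms on the finite-dimensional subspace of $RT_j(\hat K)$ cut out by vanishing interior moments, and carefully tracking powers of $h$ through the change of variables. Once this reference-element estimate is established, everything else is routine use of Cauchy--Schwarz, the trace-inverse inequality, and the integration-by-parts identity.
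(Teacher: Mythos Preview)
Your proof is correct and follows essentially the same line as the paper's: both localize to each $K$, use the integration-by-parts identity, test with $\nabla_w v_h$ for the upper bound, and for the lower bound construct $RT_j$ test functions via the degrees of freedom, relying on the same reference-element scaling estimate $\|\vq\|_K\lesssim h^{1/2}\|\vq\cdot\vn\|_{\partial K}$ that you correctly identify as the key ingredient. The only organizational difference is that the paper decouples the two contributions from the start---testing against $\vq\in D(j,K):=\{\vq\in RT_j(K):\vq\cdot\vn|_{\partial K}=0\}$ to obtain $\|\nabla v_0\|_K\le\|\nabla_w v_h\|_K$ with no boundary residue, and against single-edge lifts $D_e(j,K)$ (one edge at a time) for the jump term---whereas you test with $\nabla v_0$ itself and then back-substitute the boundary estimate; both arrangements are equivalent in spirit and effort.
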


\begin{proof}
Using the definition of $\nabla_w$, integration by parts, the
Schwarz inequality, the inequality
(\ref{boundary2interior-discrete}), and the Young's inequality, we
have
$$
\begin{aligned}
  \|\nabla_w v_h\|_K^2 &= -(v_0, \nabla\cdot\nabla_w v_h)_K +
  \langle v_b, \nabla_w v_h\cdot\vn\rangle_{\partial K} \\
  &= \langle v_b-v_0, \nabla_w v_h\cdot\vn\rangle_{\partial K} + (\nabla v_0, \nabla_w v_h)_K \\
  & \le \|v_0-v_b\|_{\partial K} \|\nabla_w v_h\cdot\vn\|_{\partial K} + \|\nabla v_0\|_K \|\nabla_w v_h\|_K \\
  & \lesssim \|v_0-v_b\|_{\partial K} h^{-\frac12} \|\nabla_w v_h\|_K + \|\nabla v_0\|_K \|\nabla_w v_h\|_K \\
  &\lesssim \|\nabla_w v_h\|_K\left(\|\nabla v_0\|_K +  h^{-\frac12} \|v_0-v_b\|_{\partial K}\right).
\end{aligned}
$$
This completes the proof of $\|\nabla_w v_h\| \lesssim |
v_h|_{1,h}$.

To prove $ |v_h|_{1,h} \lesssim \|\nabla_w v_h\|$, let $K\in\T_h$ be
any element and consider the following subspace of $RT_j(K)$
$$
D(j,K):=\{\vq\in RT_j(K): \ \vq\cdot \vn =0 \mbox{\  on\ } \partial
K\}.
$$
Note that $D(j,K)$ forms a dual of $(P_{j-1}(K))^2$. Thus, for any
$\nabla v_0\in (P_{j-1}(K))^2$, one has
\begin{equation}\label{jw.08}
\|\nabla v_0\|_K = \sup_{\vq \in D(j,K)} \frac{(\nabla v_0,
\vq)_K}{\|\vq\|_K}.
\end{equation}
It follows from the integration by parts and the definition of
$\nabla_w$ that
$$
(\nabla v_0, \vq)_K = -(v_0,\nabla\cdot\vq)_K=(\nabla_w v_h, \vq)_K,
$$
which, together with (\ref{jw.08}) and the Cauchy-Schwarz
inequality, gives
\begin{equation}\label{jw.09}
 \|\nabla v_0\|_K \le \|\nabla_w v_h\|_K.
\end{equation}
Note that for $j=0$, we have $\nabla v_0 = 0$ and the above
inequality is satisfied trivially.

Analogously, let $e$ be an edge of $K$ and denote by $D_e(j,K)$ the
collection of all $\vq\in RT_j(K)$ such that all degrees of freedom,
except those for $\vq\cdot\vn|_e$, vanish. It is well-known that
$D_e(j,K)$ forms a dual of $P_j(e)$. Thus, we have
\begin{equation}\label{jw.18}
\|v_0-v_b\|_e = \sup_{\vq \in D_e(j,K)} \frac{\langle v_0-v_b,
\vq\cdot\bn\rangle_{e}}{\|\vq\cdot\bn\|_e}.
\end{equation}
It follows from (\ref{discrete-weak-gradient-new}) and the
integration by parts on $(v_0,\nabla\cdot\vq)_K$ that
\begin{equation}\label{jw.28}
(\nabla_w v_h, \vq)_K=(\nabla v_0, \vq)_K+\langle v_b-v_0,
\vq\cdot\bn\rangle_{\partial K},\qquad \forall \ \vq\in RT_j(K).
\end{equation}
In particular, for $\vq\in D_e(j,K)$, we have
$$
(\nabla v_0, \vq)_K =0,\qquad \langle v_b-v_0,
\vq\cdot\bn\rangle_{\partial K}=\langle v_b-v_0,
\vq\cdot\bn\rangle_{e}.
$$
Substituting the above into (\ref{jw.28}) yields
\begin{equation}\label{jw.38}
(\nabla_w v_h, \vq)_K=\langle v_b-v_0, \vq\cdot\bn\rangle_{e},\qquad
\forall \ \vq\in D_e(j,K).
\end{equation}
Using the Cauchy-Schwarz inequality we arrive at
$$
|\langle v_b-v_0, \vq\cdot\bn\rangle_{e}|\leq \|\nabla_w v_h\|_K \
\|\vq\|_K,
$$
for all $\vq\in D_e(j,K)$. By the scaling argument, for such $\vq\in
D_e(j,K)$, we have $\|\vq\|_K\lesssim h^{\frac12}
\|\vq\cdot\vn\|_e$. Thus, we obtain
$$
|\langle v_b-v_0, \vq\cdot\bn\rangle_{e}|\lesssim h^{\frac12}
\|\nabla_w v_h\|_K \ \|\vq\cdot\bn\|_e,\qquad \forall\bq\in
D_e(j,K),
$$
which, together with (\ref{jw.18}), implies the following estimate
$$
\|v_0-v_b\|_e\lesssim h^{\frac12} \|\nabla_w v_h\|_K.
$$
Combining the above estimate with (\ref{jw.09}) gives a proof of $|
v_h|_{1,h} \lesssim \|\nabla_w v_h\|$. This completes the proof of
(\ref{eq:discreteh1semi}).
\end{proof}
\medskip

The discrete semi-norms satisfy the usual inverse inequality, as
stated in the following Lemma.
\medskip
\begin{lemma}
  For any $v_h=\{v_0, v_b\}\in V_h$, we have
  \begin{equation} \label{eq:discreteinverse}
    |v_h|_{1,h} \lesssim h^{-1} \|v_h\|_{0,h}.
  \end{equation}
Consequently, by combining (\ref{eq:discreteh1semi}) and (\ref{eq:discreteinverse}), we have
\begin{equation}
  \|\nabla_w v_h\| \lesssim h^{-1} \| v_h\|_{0,h}.
\end{equation}
\end{lemma}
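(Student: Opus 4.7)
The plan is to observe that the two semi-norms agree structurally in their boundary contributions, so the inequality reduces on each element to a purely standard polynomial inverse inequality applied to $v_0$.

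Concretely, I would fix $K\in\T_h$ and compare the two local quantities
$$
|v_h|_{1,h,K}^2 = |v_0|_{1,K}^2 + h^{-1}\|v_0-v_b\|_{\partial K}^2,\qquad
\|v_h\|_{0,h,K}^2 = \|v_0\|_{0,K}^2 + h\,\|v_0-v_b\|_{\partial K}^2.
$$
Multiplying the second expression by $h^{-2}$ produces the term $h^{-1}\|v_0-v_b\|_{\partial K}^2$, which already matches the boundary contribution in $|v_h|_{1,h,K}^2$. Hence it suffices to control $|v_0|_{1,K}^2$ by $h^{-2}\|v_0\|_{0,K}^2$. Since $v_0\in P_j(K_0)$ is a polynomial on the shape-regular element $K$ with $h_K\simeq h$, this is the classical polynomial inverse inequality, and the bound follows by a scaling argument from the reference element. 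Summing over all $K\in\T_h$ then yields \eqref{eq:discreteinverse}.

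For the consequence, I would combine \eqref{eq:discreteinverse} with the upper bound $\|\nabla_w v_h\|\lesssim |v_h|_{1,h}$ established in Lemma \ref{lem:discretenorm-equivalence} to obtain
$$
\|\nabla_w v_h\|\lesssim |v_h|_{1,h}\lesssim h^{-1}\|v_h\|_{0,h}.
$$

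I do not anticipate any real obstacle: the only nontrivial ingredient is the standard inverse inequality for polynomials of degree $\le j$ on a shape-regular triangle, which depends on $j$ but is independent of $h$ and thus absorbed into the constant hidden in $\lesssim$. The boundary terms in $\|\cdot\|_{0,h}$ and $|\cdot|_{1,h}$ are already compatible under the scaling $h^{-2}$, so no trace inequality is needed for this particular estimate.
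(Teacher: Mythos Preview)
Your proposal is correct and matches the paper's approach exactly: the paper itself simply states that the result ``follows from the standard inverse inequality and the definition of $\|\cdot\|_{0,h}$ and $|\cdot|_{1,h}$; details are thus omitted.'' Your write-up in fact supplies precisely the details the paper skips, namely the elementwise comparison showing that the boundary terms already scale correctly under $h^{-2}$ and that only the classical polynomial inverse inequality $|v_0|_{1,K}\lesssim h^{-1}\|v_0\|_{0,K}$ is needed for the interior part.
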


\begin{proof}
The proof follows from the standard inverse inequality and the
definition of $\|\cdot\|_{0,h}$ and $|\cdot|_{1,h}$; details are
thus omitted.
\end{proof}
\medskip

Next, let us show that the discrete semi-norm $\|\nabla_w
(\cdot)\|$, which is equivalent to $|\cdot|_{1,h}$ as proved in
Lemma \ref{lem:discretenorm-equivalence}, satisfies a
Poincar\'{e}-type inequality.

\medskip
\begin{lemma}
The Poincar\'{e}-type inequality holds true for functions in
$V_{0,h}$ and $\bar{V}_h$. In other words, we have the following
estimates:
\begin{align}
\| v_h \|_{0,h} &\lesssim \|\nabla_w v_h\|\qquad \forall \ v_h\in V_{0,h},\label{eq:discretepoincare1}\\
\| v_h \|_{0,h} &\lesssim \|\nabla_w v_h\|\qquad \forall \ v_h\in
\bar{V}_h.\label{eq:discretepoincare2}
\end{align}
\end{lemma}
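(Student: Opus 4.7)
The strategy I would adopt is a standard duality (Aubin--Nitsche) argument built on identity (\ref{div-q-homogeneous}). First observe that the boundary-jump piece of $\|v_h\|_{0,h}$ is harmless: by Lemma \ref{lem:discretenorm-equivalence},
$$\sum_{K\in\T_h} h\|v_0-v_b\|_{\partial K}^2 \;=\; h^2\sum_{K\in\T_h} h^{-1}\|v_0-v_b\|_{\partial K}^2 \;\le\; h^2 |v_h|_{1,h}^2 \;\lesssim\; h^2\|\nabla_w v_h\|^2.$$
So both inequalities reduce to establishing $\|v_0\|\lesssim\|\nabla_w v_h\|$ in each setting.

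For $v_h=\{v_0,v_b\}\in V_{0,h}$, I would introduce the Dirichlet dual $\phi\in H_0^1(\Omega)$ satisfying $-\Delta\phi=v_0$. By Grisvard-type elliptic regularity on the polygonal domain, $\phi\in H^{1+\alpha}(\Omega)$ for some $\alpha>1/2$ with $\|\phi\|_{1+\alpha}\lesssim\|v_0\|$. Setting $\bq:=-\nabla\phi\in(H^\alpha(\Omega))^2\cap H(\mathrm{div},\Omega)$, identity (\ref{div-q-homogeneous}) applies (thanks to $v_h\in V_{0,h}$) and gives
$$\|v_0\|^2 \;=\; (\nabla\cdot\bq,v_0) \;=\; -(\bPi_h\bq,\nabla_w v_h) \;\le\; \|\bPi_h\bq\|\,\|\nabla_w v_h\|.$$
A triangle inequality plus the approximation estimate (\ref{eq:bpi-prop-2}) then yields $\|\bPi_h\bq\|\le\|\bq\|+\|\bq-\bPi_h\bq\|\lesssim\|\phi\|_1+h^\alpha\|\phi\|_{1+\alpha}\lesssim\|v_0\|$, and dividing by $\|v_0\|$ produces (\ref{eq:discretepoincare1}).

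For $v_h\in\bar V_h$, only the dual problem needs to change: solve the Neumann problem $-\Delta\phi=v_0$ in $\Omega$, $\partial\phi/\partial\bn=0$ on $\partial\Omega$, with $\int_\Omega\phi\,dx=0$. This is well-posed \emph{exactly} because $\int_\Omega v_0\,dx=0$ for $v_h\in\bar V_h$, and the same polygonal regularity gives $\|\phi\|_{1+\alpha}\lesssim\|v_0\|$ with $\alpha>1/2$. Now $\bq=-\nabla\phi$ automatically satisfies $\bq\cdot\bn=0$ on $\partial\Omega$, so the second alternative in (\ref{div-q-homogeneous}) activates without needing $v_h$ to vanish on the boundary, and the remainder of the argument is verbatim to the Dirichlet case.

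The main obstacle is not the duality manipulation itself but securing enough elliptic regularity on a general polygonal $\Omega$ so that $\bq\in(H^\gamma(\Omega))^2$ with $\gamma>1/2$ and $\bPi_h\bq$ is defined. For both the Dirichlet and Neumann Laplacian this is supplied by Grisvard's estimate $\phi\in H^{1+\pi/\omega_{\max}-\epsilon}(\Omega)$, where $\omega_{\max}<2\pi$ is the largest interior angle; this guarantees an admissible $\alpha>1/2$. Once this single nontrivial regularity input is cited, the rest of the proof is essentially mechanical.
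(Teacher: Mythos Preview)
Your argument is correct and complete, but the paper constructs the auxiliary field $\bq$ differently. For (\ref{eq:discretepoincare1}) the paper does \emph{not} solve the Dirichlet Laplacian on $\Omega$: instead it extends $v_0$ by zero to a larger \emph{convex} domain, solves the Poisson problem there (where full $H^2$ regularity is automatic), and takes $\bq$ to be the restriction of the resulting gradient. For (\ref{eq:discretepoincare2}) it quotes the Arnold--Scott--Vogelius regular right inverse of the divergence with vanishing normal trace. In both cases the paper obtains $\bq\in (H^1(\Omega))^2$ with $\|\bq\|_1\lesssim\|v_0\|$ \emph{regardless of the geometry of $\Omega$}, so it never needs any fractional regularity statement on the original domain. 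Your route---solving the Dirichlet/Neumann Laplacian directly on $\Omega$ and invoking Grisvard to guarantee $\phi\in H^{1+\alpha}$ with $\alpha>1/2$---is the more textbook Aubin--Nitsche duality and is perfectly valid (the condition $\omega_{\max}<2\pi$ is always satisfied on a Lipschitz polygon), but it buys only $\bq\in(H^\alpha(\Omega))^2$ and therefore leans on the somewhat heavier regularity machinery. The paper's approach is slightly slicker in that it yields integer-order estimates and avoids discussing corner singularities altogether; yours has the advantage of being the argument a reader would most likely reconstruct on their own.
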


\begin{proof}
For any $v_h\in V_{0,h}$, let $\bq\in (H^1(\Omega))^2$ be such that
$\nabla\cdot\bq = v_0$ and $\|\bq\|_1 \lesssim \|v_0\|$. Such a
vector-valued function $\bq$ exists on any polygonal domain
\cite{arnold-regularinversion}. One way to prove the existence of
$\bq$ is as follows. First, one extends $v_h$ by zero to a convex
domain which contains $\Omega$. Secondly, one considers the Poisson
equation on the enlarged domain and set $\vq$ to be the flux. The
required properties of $\vq$ follow immediately from the full
regularity of the Poisson equation on convex domains. By
(\ref{eq:bpi-prop}), we have
$$
\|\bPi_h \bq\|\lesssim \|\bq\|_1 \lesssim \|v_0\|.
$$
Consequently, by (\ref{div-q-homogeneous}) and the Schwarz inequality,
$$
\|v_0\|^2 = (v_0, \nabla\cdot\bq) = -(\bPi_h \bq, \nabla_w v_h)
\lesssim \|v_0\| \|\nabla_w v_h\|.
$$
It follows from Lemma \ref{lem:discretenorm-equivalence} that
$$
\sum_{K\in \T_h} h \|v_0-v_b\|_{\partial K}^2 \lesssim \sum_{K\in
\T_h} h^{-1} \|v_0-v_b\|_{\partial K}^2 \le | v_h|_{1,h}^2 \lesssim
\|\nabla_w v_h\|^2.
$$
Combining the above two estimates gives a proof of the inequality
(\ref{eq:discretepoincare1}).

As to (\ref{eq:discretepoincare2}), since $v_h\in \bar{V}_h$ has
mean value zero, one may find a vector-valued function $\bq$
satisfying $\nabla\cdot\bq= v_0$ and $\bq\cdot\bn = 0$ on
$\partial\Omega$ (see \cite{arnold-regularinversion} for details).
In addition, we have $\|\bq\|_1 \lesssim \|v_0\|$. The rest of the
proof follows the same avenue as the proof of
(\ref{eq:discretepoincare1}).
\end{proof}
\medskip

Next, we shall introduce a discrete norm in the finite element space
$V_{0,h}$ that plays the role of the standard $H^2$ norm. To this
end, for any internal edge $e \in \E_h$, denote by $K_1$ and $K_2$
the two triangles sharing $e$, and by $\bn_1$, $\bn_2$ the outward
normals with respect to $K_1$ and $K_2$. Define the jump on $e$ by
$$
\ljump \nabla_w \psi_h\cdot \bn\rjump =
(\nabla_w\psi_h)|_{K_1}\cdot\bn_1 +
(\nabla_w\psi_h)|_{K_2}\cdot\bn_2.
$$
If the edge $e$ is on the boundary $\partial\Omega$, then there is
only one triangle $K$ which admits $e$ as an edge. The jump is then
modified as
$$
\ljump\nabla_w \psi_h\cdot \bn\rjump = (\nabla_w\psi_h)|_{K} \cdot
\bn.
$$
For $\psi_h\in V_{0,h}$, define
\begin{equation}\label{triple-bar-h2}
\3bar \psi_h \3bar = \left(\sum_{K\in\T_h} \|\nabla\cdot\nabla_w
\psi_h\|_{K}^2 + \sum_{e\in\E_h}h^{-1} \|\ljump\nabla_w \psi_h\cdot
\bn\rjump\|_e^2) \right)^{1/2}.
\end{equation}

\medskip
\begin{lemma} The map $\3bar\cdot\3bar:\ V_{0,h} \to \mathbb{R}$,
as given in (\ref{triple-bar-h2}), defines a norm in the finite
element space $V_{0,h}$. Moreover, one has
\begin{align}
  (\nabla_w v_h,\nabla_w \psi_h) &\lesssim \|v_h\|_{0,h} \3bar\psi_h\3bar &&\forall \
  v_h\in V_h,\,\psi_h\in V_{0,h}, \label{eq:3barnorm-1} \\
  \sup_{v_h\in V_h} \frac{(\nabla_w v_h, \nabla_w \psi_h)}{\|v_h\|_{0,h}} &\gtrsim
  \3bar\psi_h\3bar &&\forall\ \psi_h\in V_{0,h}. \label{eq:3barnorm-2}
\end{align}
\end{lemma}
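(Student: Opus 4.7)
My plan is to prove the two inequalities first and then derive the norm (definiteness) property from \eqref{eq:3barnorm-2}; the other norm axioms are obvious from the definition of $\3bar\cdot\3bar$.

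For \eqref{eq:3barnorm-1}, the engine is the defining identity \eqref{discrete-weak-gradient-new} of the discrete weak gradient, applied with test $\vq = \nabla_w\psi_h|_K\in RT_j(K)$ on every element and summed. This produces
\[
(\nabla_w v_h,\nabla_w\psi_h) \;=\; -\sum_{K\in\T_h}(v_0,\nabla\cdot\nabla_w\psi_h)_K \;+\; \sum_{K\in\T_h}\langle v_b,\nabla_w\psi_h\cdot\bn\rangle_{\partial K}.
\]
Because $v_b$ is single-valued on each edge, the boundary sum regroups edge-by-edge into $\sum_{e\in\E_h}\langle v_b,\ljump\nabla_w\psi_h\cdot\bn\rjump\rangle_e$ (with the convention that on $\partial\Omega$ the jump is just the one-sided normal component, which matches the definition in the excerpt). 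I then apply Cauchy--Schwarz to each of the two sums; the $\nabla\cdot\nabla_w\psi_h$ and jump factors assemble into $\3bar\psi_h\3bar$, while the remaining pieces are $\|v_0\|$ and $\bigl(\sum_e h\|v_b\|_e^2\bigr)^{1/2}$. The latter is controlled by $\|v_h\|_{0,h}$ by writing $v_b = (v_b-v_0) + v_0$ and invoking the polynomial trace inequality \eqref{boundary2interior-discrete} on the $v_0$ piece, an equivalence already noted below \eqref{boundary2interior-discrete}.

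For \eqref{eq:3barnorm-2}, the natural move is to exhibit an explicit test function $v_h\in V_h$ saturating the bound. Since $\nabla_w\psi_h|_K\in RT_j(K)$ satisfies $\nabla\cdot RT_j(K) = P_j(K_0)$ and $RT_j(K)\cdot\bn|_e = P_j(e)$, I may legitimately set
\[
v_0|_K \;=\; -\,\nabla\cdot\nabla_w\psi_h|_K, \qquad v_b|_e \;=\; h^{-1}\,\ljump\nabla_w\psi_h\cdot\bn\rjump|_e,
\]
giving $v_h=\{v_0,v_b\}\in V_h$. Plugging into the identity derived for \eqref{eq:3barnorm-1} produces exactly $(\nabla_w v_h,\nabla_w\psi_h) = \3bar\psi_h\3bar^2$. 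It then remains to verify that $\|v_h\|_{0,h}\lesssim \3bar\psi_h\3bar$: the $\|v_0\|^2$ contribution equals $\sum_K\|\nabla\cdot\nabla_w\psi_h\|_K^2$, and the boundary piece $\sum_K h\|v_0-v_b\|_{\partial K}^2$ is split and handled by \eqref{boundary2interior-discrete} for the $v_0$ part and by the direct identity $h\|v_b\|_e^2 = h^{-1}\|\ljump\nabla_w\psi_h\cdot\bn\rjump\|_e^2$ for the $v_b$ part.

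Finally, for the norm property, I only need definiteness: if $\3bar\psi_h\3bar = 0$ for some $\psi_h\in V_{0,h}$, then \eqref{eq:3barnorm-2} forces $(\nabla_w v_h,\nabla_w\psi_h)=0$ for every $v_h\in V_h$; taking $v_h=\psi_h$ gives $\nabla_w\psi_h\equiv 0$, and Lemma~\ref{lem:assumptions} combined with $\psi_b|_{\partial\Omega}=0$ then yields $\psi_h\equiv 0$. The main obstacle I anticipate is purely bookkeeping: correctly regrouping element boundary integrals into edge jumps (especially at boundary edges, where the modified jump definition must line up with the one-sided contribution coming from the single adjacent element), and keeping the constants in the trace/inverse inequalities clean so that the two sides of the equivalence $\|v_h\|_{0,h}^2 \simeq \sum_K(\|v_0\|_K^2 + h\|v_b\|_{\partial K}^2)$ are used without circularity.
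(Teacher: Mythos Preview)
Your argument is correct and follows essentially the same route as the paper: the identity obtained from \eqref{discrete-weak-gradient-new} with $\vq=\nabla_w\psi_h$, regrouped edge-by-edge, gives \eqref{eq:3barnorm-1} via Cauchy--Schwarz and the equivalence below \eqref{boundary2interior-discrete}; and the explicit test function $v_0=-\nabla\cdot\nabla_w\psi_h$, $v_b=h^{-1}\ljump\nabla_w\psi_h\cdot\bn\rjump$ is exactly the paper's choice for \eqref{eq:3barnorm-2}. One small slip: in your definiteness step you cite \eqref{eq:3barnorm-2}, but that is a \emph{lower} bound on the supremum and says nothing when $\3bar\psi_h\3bar=0$; it is \eqref{eq:3barnorm-1} (or, equivalently, the identity you derived for it) that forces $(\nabla_w v_h,\nabla_w\psi_h)=0$ for all $v_h$, after which your use of Lemma~\ref{lem:assumptions} and the boundary condition is fine (and matches the paper's direct verification).
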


\begin{proof} To verify that $\3bar\cdot\3bar$ defines a norm, it is
sufficient to show that $\3bar\psi_h\3bar=0$ implies $\psi_h \equiv
0$. To this end, let $\3bar\psi_h\3bar=0$. It follows that
$\nabla\cdot \nabla_w \psi_h=0$ on each element and $\ljump\nabla_w
\psi_h\cdot \bn\rjump=0$ on each edge. The definition of the
discrete weak gradient $\nabla_w$ then implies the following
$$
(\nabla_w \psi_h, \nabla_w \psi_h) =  \sum_{K\in \T_h}
\left(-(\psi_0, \nabla\cdot\nabla_w \psi_h)_K + \langle\psi_b,
\nabla_w\psi_h \cdot\vn\rangle_{\partial K} \right)=0.
$$
Thus, we have $\nabla_w\psi_h=0$. Since $\psi_h\in V_{0,h}$, then
$\nabla_w \psi_h = 0$ implies $\psi_h\equiv 0$. This shows that
$\3bar\cdot\3bar$ defines a norm in $V_{0,h}$. The inequality
(\ref{eq:3barnorm-1}) follows immediately from the following
identity
$$
(\nabla_w v_h, \nabla_w \psi_h) =  \sum_{K\in \T_h} \left(-(v_0,
\nabla\cdot \nabla_w \psi_h)_K + \langle v_b, \nabla_w\psi_h
\cdot\vn\rangle_{\partial K} \right)
$$
and the Schwarz inequality.

To verify (\ref{eq:3barnorm-2}), we chose a particular $v^*_h \in
V_h$ such that
$$
\begin{aligned}
v^*_0 &= - \nabla\cdot \nabla_w \psi_h \qquad &&\textrm{in } K_0, \\
v^*_b &= h^{-1} \ljump\nabla_w\psi_h\cdot\vn\rjump  \qquad
&&\textrm{on edge } e.
\end{aligned}
$$
It is not hard to see that $\|v^*_h\|_{0,h} \lesssim
\3bar\psi_h\3bar$. Thus, we have
$$
\begin{aligned}
  \sup_{v_h\in V_h} \frac{(\nabla_w v_h, \nabla_w \psi_h)}{\|v_h\|_{0,h}} &\ge \frac{(\nabla_w v^*_h, \nabla_w \psi_h)}{\|v^*_h\|_{0,h}} \\
  &= \frac{\sum_{K\in \T_h} \left(-(v^*_0, \nabla\cdot \nabla_w \psi_h)_K + \langle v^*_b,
  \nabla_w\psi_h \cdot\vn\rangle_{\partial K} \right)}{\|v^*_h\|_{0,h}} \\
  &= \frac{\3bar\psi_h\3bar^2}{\|v^*_h\|_{0,h}}\gtrsim \3bar\psi_h\3bar.
\end{aligned}
$$
This completes the proof of the lemma.
\end{proof}
\medskip

\begin{remark}
Using the boundedness (\ref{eq:3barnorm-1}) and the discrete
Poincare inequality (\ref{eq:discretepoincare1}) we have the
following estimate for all $\psi_h\in V_{0,h}$
$$
\|\nabla_w \psi_h\|^2 = (\nabla_w \psi_h, \nabla_w \psi_h) \lesssim
\|\psi_h\|_{0,h} \3bar\psi_h\3bar \lesssim \|\nabla_w \psi_h\|
\3bar\psi_h\3bar.
$$
This implies that $\|\nabla_w \psi_h\|\lesssim \3bar \psi_h\3bar$.
In other words, $\3bar\cdot\3bar$ is a norm that is stronger than
$\|\cdot\|_{1,h}$. In fact, the norm $\3bar\cdot\3bar$ can be viewed
as a discrete equivalence of the standard $H^2$ norm for smooth
functions with proper boundary conditions.
\end{remark}

Next, we shall establish an estimate for the $L^2$ projection
operator $Q_h$ in the discrete norm $\|\cdot\|_{0,h}$.
\medskip
\begin{lemma} \label{lem:Qh}
 Let $Q_h$ be the $L^2$ projection operator into the finite element space
 $V_h$. Then, for any $v\in H^{m}(\Omega)$ with $\frac12 < m \le j+1$, we have
  \begin{equation} \label{eq:a5}
    \|v-Q_h v\|_{0,h} \lesssim h^m \|v\|_{m}.
  \end{equation}
\end{lemma}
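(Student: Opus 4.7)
The plan is to unfold the discrete norm $\|\cdot\|_{0,h}$ on $v-Q_h v$, treating $v\in H^m(\Omega)$ as a pair $\{v,\,v|_e\}$ as the excerpt already allowed. Concretely, one has $(v-Q_h v)_0 = v-Q_0 v$ on each $K_0$ and $(v-Q_h v)_b = v|_e - Q_b v$ on each $e$, so
\begin{equation*}
\|v-Q_h v\|_{0,h}^2 \;=\; \sum_{K\in\T_h}\|v-Q_0 v\|_{0,K}^2 \;+\; h\,\|Q_b v - Q_0 v\|_{\partial K}^2,
\end{equation*}
where I used that $(v-Q_0 v)|_{\partial K}-(v|_{\partial K}-Q_b v)=Q_b v-Q_0 v$ on $\partial K$.

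For the interior piece I would invoke the standard $L^2$ projection estimate, $\|v-Q_0 v\|_{0,K}\lesssim h^m\|v\|_{m,K}$, valid for $0\le m\le j+1$. For the boundary piece I would split via the triangle inequality
\begin{equation*}
\|Q_b v - Q_0 v\|_{\partial K} \;\le\; \|v-Q_0 v\|_{\partial K} \;+\; \|v-Q_b v\|_{\partial K},
\end{equation*}
and then handle each term. The first is controlled by the trace inequality (\ref{boundary2interior}) applied to $g=v-Q_0 v$ with $s=\min(m,1)\in(\tfrac12,1]$, combined with the standard $L^2$ and $H^s$ projection estimates on $K$, yielding $\|v-Q_0 v\|_{\partial K}\lesssim h^{m-1/2}\|v\|_{m,K}$. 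The second term is bounded by the first using the best-approximation property of $Q_b$: since $Q_0 v|_e \in P_j(e)$, we have $\|v-Q_b v\|_e\le\|v-Q_0 v|_e\|_e$ edge by edge, so $\|v-Q_b v\|_{\partial K}\le\|v-Q_0 v\|_{\partial K}$. Multiplying by $h$ and squaring gives $h\,\|Q_b v-Q_0 v\|_{\partial K}^2\lesssim h^{2m}\|v\|_{m,K}^2$.

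Summing the interior and boundary contributions over $K\in\T_h$ and taking the square root produces the desired bound $\|v-Q_h v\|_{0,h}\lesssim h^m\|v\|_m$.

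The only technical point requiring care is the low-regularity range $\tfrac12<m\le 1$. There, neither the classical trace theorem on $H^1$ nor a direct $H^1$ interpolation estimate is available for $v-Q_0 v$; this is precisely why the fractional trace inequality (\ref{boundary2interior}) with exponent $s\in(\tfrac12,1]$ was recorded in the paper. Choosing $s=m$ in that regime balances the two terms $h^{-1}\|v-Q_0 v\|_K^2$ and $h^{2m-1}|v-Q_0 v|_{m,K}^2$ so that both scale like $h^{2m-1}\|v\|_{m,K}^2$, which is the main subtlety of the argument; the remainder is routine.
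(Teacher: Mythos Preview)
Your proof is correct and follows essentially the same route as the paper's: reduce the edge contribution to $\|v-Q_0v\|_{\partial K}$ via the best-approximation property of $Q_b$, then apply the fractional trace inequality (\ref{boundary2interior}) with $s=\min(m,1)$ together with the standard approximation estimates for $Q_0$. The only cosmetic difference is that the paper bounds $\|Q_0v-Q_bv\|_{\partial K}\le\|v-Q_0v\|_{\partial K}$ in one step (using $Q_b(Q_0v)=Q_0v$ on each edge), whereas you pass through a triangle inequality first; the substance is identical.
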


\begin{proof}
For the $L^2$ projection on each element $K$, it is known that the following estimate holds
true
\begin{equation}\label{five-guys.00}
 \|v-Q_0v\|_K \lesssim h^m \|v\|_{m,K}.
\end{equation}
Thus, it suffices to deal with the terms associated with the
edges/faces given by
\begin{equation}\label{five-guys.01}
\sum_K h\|(v-Q_0v)-(v-Q_bv)\|^2_{\partial K}= \sum_K
h\|Q_0v-Q_bv\|^2_{\partial K}.
\end{equation}
Since $Q_b$ is the $L^2$ projection on edges, then we have
$$
\|Q_0v-Q_bv\|^2_{\partial K} \leq \|v-Q_0v\|^2_{\partial K}.
$$
Let $s\in (\frac12, 1]$ be any real number satisfying $s\le m$. It
follows from the above inequality and the trace inequality
(\ref{boundary2interior}) that
$$
\|Q_0v-Q_bv\|^2_{\partial K} \lesssim h^{-1} \|v-Q_0v\|^2_{K} +
h^{2s-1}|v-Q_0v|^2_{s,K}.
$$
Substituting the above into (\ref{five-guys.01}) yields
\begin{eqnarray*}
\sum_K h\|(v-Q_0v)-(v-Q_bv)\|^2_{\partial K}&\lesssim &\sum_K \left(
\|v-Q_0v\|^2_{K} + h^{2s}|v-Q_0v|^2_{s,K}\right)\\
&\lesssim & h^{2m}\|v\|_{m}^2,
\end{eqnarray*}
which, together with (\ref{five-guys.00}), completes the proof of
the lemma.
\end{proof}
\medskip

\subsection{Ritz and Neumann Projections}

To establish an error analysis in the forthcoming section, we shall
introduce and analyze two additional projection operators, the Ritz
projection $R_h$ and the Neumann projection $N_h$, by applying the
weak Galerkin method to the Poisson equation with various boundary
conditions.

For any $v\in H_0^1(\Omega)\cap H^{1+\gamma}(\Omega)$ with
$\gamma>\frac12$, the Ritz projection $R_h v\in V_{0,h}$ is defined
as the unique solution of the following problem:
\begin{equation} \label{eq:Ritz}
(\nabla_w (R_h v),\nabla_w \psi_h) = (\bPi_h \nabla v,  \nabla_w
\psi_h),\qquad \forall\ \psi_h\in V_{0,h}.
\end{equation}
Here $\gamma>\frac12$ in the definition of $R_h$ is imposed to ensure that
$\bPi_h \nabla v$ is well-defined.
From the identity (\ref{div-q-homogeneous}), clearly if $\Delta v\in L^2(\Omega)$,
then $R_h v$ is identical to the weak Galerkin finite element solution \cite{WangYe_PrepSINUM_2011} to the Poisson
equation with homogeneous Dirichlet boundary condition for which $v$ is the
exact solution.
Analogously, for any $v\in \bar{H}^1(\Omega)\cap H^{1+\gamma}(\Omega)$ with $\gamma>\frac12$, we
define the Neumann projection $N_h v\in \bar{V}_h$ as the solution
to the following problem
\begin{equation} \label{eq:Neumann}
(\nabla_w (N_h v),\nabla_w \psi_h)  = (\bPi_h \nabla v,  \nabla_w
\psi_h),\qquad \forall \ \psi_h\in \bar{V}_h.
\end{equation}
It is useful to note that the above equation holds true for all
$\psi_h\in V_h$ as $\nabla_w 1 = 0$. Similarly, if $\Delta v\in
L^2(\Omega)$ and in addition $\partial v/\partial \vn =0$ on
$\partial \Omega$, then $N_h v$ is identical to the weak Galerkin
finite element solution to the Poisson equation with homogeneous
Neumann boundary condition, for which $v$ is the exact solution. The
well-posedness of $R_h$ and $N_h$ follows immediately from the
Poincar\'{e}-type inequalities (\ref{eq:discretepoincare1}) and
(\ref{eq:discretepoincare2}).

Using (\ref{commutative}), it is easy to see that for all $\psi_h\in
V_{0,h}$ we have
\begin{equation} \label{eq:Ritz-orthogonal}
  (\nabla_w (Q_hv-R_h v),\nabla_w \psi_h) = ((\bP_h - \bPi_h) \nabla v,  \nabla_w \psi_h).
\end{equation}
And similarly, for all $\psi_h\in \bar{V}_h$,
\begin{equation} \label{eq:Neumann-orthogonal}
  (\nabla_w (Q_hv-N_h v),\nabla_w \psi_h) = ((\bP_h - \bPi_h) \nabla v,  \nabla_w \psi_h).
\end{equation}
From the definitions of $\bar{V}_h$ and $Q_h$, clearly $Q_h$ maps
$\bar{H}^1(\Omega)$ into $\bar{V}_h$.

For convenience, let us adopt the following notation
\begin{eqnarray*}
\{R_0 v, R_b v\}:=R_h v, \qquad \{N_0 v, N_b v\}:=N_h v,
\end{eqnarray*}
where again the subscript ``$0$'' denotes the function value in the
interior of triangles, while ``$b$'' denotes the trace on $\E_h$.
For Ritz and Neumann projections, the following approximation error
estimates hold true.

\medskip
\begin{lemma} \label{lem:elliptic-error}
For $v\in H_0^1(\Omega) \cap H^{m+1}(\Omega)$ or $\bar{H}^1(\Omega) \cap H^{m+1}(\Omega)$, where $\frac{1}{2}< m\le j+1$, we have
\begin{align}
  \|\nabla_w(Q_h v-R_h v)\| &\lesssim h^{m} \|v\|_{m+1}, \label{eq:Ritz-error-H1}\\
  \|\nabla_w(Q_h v-N_h v)\| &\lesssim h^{m} \|v\|_{m+1}. \label{eq:Neumann-error-H1}
\end{align}
Moreover, assume $\Delta v\in L^2(\Omega)$ and
that the Poisson problem in $\Omega$ with either the homogeneous Dirichlet boundary condition or
the homogeneous Neumann boundary condition has $H^{1+s}$ regularity, where $\frac{1}{2}< s \le 1$, then
\begin{align}
  \| Q_0 v-R_0 v\| &\lesssim h^{m+s} \|v\|_{m+1} + h^{1+s} \|(I-Q_0)\Delta v\|, \label{eq:Ritz-error-L2} \\
  \| Q_0 v-N_0 v\| & \lesssim h^{m+\min(s,j+\frac12)} \|v\|_{m+1} + h^{1+s} \|(I-Q_0)\Delta v\|. \label{eq:Neumann-error-L2}
\end{align}
\end{lemma}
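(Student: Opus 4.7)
\ I would handle the four bounds in two stages. For the energy-norm estimates (\ref{eq:Ritz-error-H1}) and (\ref{eq:Neumann-error-H1}), test the orthogonality (\ref{eq:Ritz-orthogonal}) against $\psi_h = Q_hv - R_hv \in V_{0,h}$ (respectively (\ref{eq:Neumann-orthogonal}) against $\psi_h = Q_hv - N_hv \in \bar V_h$, which is admissible since both $Q_hv$ and $N_hv$ have zero mean); Cauchy--Schwarz together with the projection estimate (\ref{a2}) then yields
\[
\|\nabla_w(Q_hv - R_hv)\|^2 \le \|(\bP_h - \bPi_h)\nabla v\|\,\|\nabla_w(Q_hv - R_hv)\| \lesssim h^m\|v\|_{m+1}\,\|\nabla_w(Q_hv - R_hv)\|,
\]
and the analogous chain for the Neumann projection, which proves both energy bounds.

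For the $L^2$ estimates I would use an Aubin--Nitsche duality adapted to the weak Galerkin framework. In the Ritz case, set $\phi = Q_0v - R_0v$ and let $\Phi$ solve $-\Delta\Phi = \phi$ with $\Phi|_{\partial\Omega} = 0$, so that $\|\Phi\|_{1+s} \lesssim \|\phi\|$. Applying (\ref{div-q-homogeneous}) to $\bq = \nabla\Phi$ and $v_h = Q_hv - R_hv$ converts $\|\phi\|^2 = -(\Delta\Phi, e_0)$ into $(\bPi_h\nabla\Phi, \nabla_w(Q_hv - R_hv))$. Splitting $\bPi_h\nabla\Phi = (\bPi_h - \bP_h)\nabla\Phi + \bP_h\nabla\Phi$: the first summand contributes $O(h^{m+s}\|v\|_{m+1}\|\phi\|)$ by (\ref{a2}) applied to $\Phi$ combined with the energy bound just proved. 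For the second, the commutativity (\ref{commutative}) identifies $\bP_h\nabla\Phi = \nabla_w Q_h\Phi$, and since $\Phi \in H_0^1$ implies $Q_h\Phi \in V_{0,h}$ one may substitute $\psi_h = Q_h\Phi$ in (\ref{eq:Ritz-orthogonal}) to arrive at $((\bP_h - \bPi_h)\nabla v, \bP_h\nabla\Phi)$. The $L^2$-orthogonality of $\bP_h$ onto $\Sigma_h$ (which contains both $\bP_h\nabla v$ and $\bPi_h\nabla v$) lets me replace $\bP_h\nabla\Phi$ by $\nabla\Phi$, after which a further split produces $(\bP_h\nabla v - \nabla v, \nabla\Phi - \bP_h\nabla\Phi)$ (again of order $h^{m+s}$) plus $(\nabla v - \bPi_h\nabla v, \nabla\Phi)$. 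I would integrate the latter by parts using $\nabla\cdot\bPi_h\nabla v = Q_0\Delta v$ from (\ref{eq:bpi-prop}) together with $\Phi|_{\partial\Omega} = 0$, obtaining $-((I-Q_0)\Delta v, \Phi - Q_0\Phi) \lesssim h^{1+s}\|(I-Q_0)\Delta v\|\,\|\Phi\|_{1+s}$. Dividing by $\|\phi\|$ then proves (\ref{eq:Ritz-error-L2}).

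The Neumann estimate follows the same template with the dual problem $-\Delta\Phi = \phi$, $\partial\Phi/\partial\vn = 0$, $\int_\Omega\Phi = 0$, using (\ref{eq:Neumann-orthogonal}) and the test $Q_h\Phi \in \bar V_h$. The one genuinely new term is the boundary integral produced by integration by parts,
\[
\int_{\partial\Omega}(\bPi_h\nabla v - \nabla v)\cdot\vn\;\Phi\,ds = -\int_{\partial\Omega}(I - Q_b)(\nabla v\cdot\vn)\,(\Phi - Q_b\Phi)\,ds,
\]
where I used the identity $(\bPi_h\bq)\cdot\vn|_e = Q_b(\bq\cdot\vn)|_e$ and $L^2(e)$-orthogonality of the edge projection. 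I expect this to be the main technical obstacle: bounding the two factors sharply via (\ref{boundary2interior}) together with the element-wise estimate (\ref{eq:bpi-prop-2}) yields $O(h^{m-1/2}\|v\|_{m+1})$ times $O(h^{\min(s+1/2,\,j+1)}\|\Phi\|_{1+s})$, and it is precisely the ceiling $j+1$ coming from the polynomial degree on boundary edges that forces the exponent $s$ to be replaced by $\min(s, j+\tfrac12)$ in (\ref{eq:Neumann-error-L2}). The rest of the Neumann computation mirrors the Ritz case verbatim.
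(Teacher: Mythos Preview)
Your proposal is correct and follows essentially the same Aubin--Nitsche duality argument as the paper, arriving at the identical key term $((I-\bPi_h)\nabla v,\nabla\Phi)$ and the same boundary-integral analysis that produces the $\min(s,j+\tfrac12)$ exponent. The only difference is cosmetic: you test the orthogonality relation directly with $Q_h\Phi$, whereas the paper first inserts $\nabla_w N_h\phi$ (resp.\ $\nabla_w R_h\phi$) and then splits off $\nabla_w(N_h\phi-Q_h\phi)$; your route is one step shorter but otherwise identical.
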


\begin{proof} The estimates (\ref{eq:Ritz-error-H1})-(\ref{eq:Neumann-error-H1}) follow
immediately from
(\ref{eq:Ritz-orthogonal})-(\ref{eq:Neumann-orthogonal}),
(\ref{a2}), and the Schwarz inequality. Next, we prove
(\ref{eq:Neumann-error-L2}) by using the standard duality argument.
Let $\phi\in \bar{H}^1(\Omega)$ be the solution of $-\Delta \phi =
Q_0 v-N_0 v$ with boundary condition $\left.\frac{\partial
\phi}{\partial\vn}\right|_{\partial \Omega} = 0$. Note that $\phi$
is well-defined since $Q_h v-N_h v \in \bar{V}_h$. According to the
regularity assumption, we have $\phi\in H^{1+s}(\Omega)$ and
$\|\phi\|_{{1+s}}\lesssim \|Q_0 v-N_0 v\|$. Then, by
(\ref{div-q-homogeneous}), (\ref{eq:Neumann-orthogonal}), the
Schwarz inequality and (\ref{a2}), we arrive at
$$
\begin{aligned}
  \| Q_0 v-N_0 v\|^2 &= (Q_0 v-N_0 v, -\Delta \phi) = (\bPi_h \nabla \phi, \nabla_w (Q_h v-N_h v)) \\
  &= (\bPi_h \nabla\phi - \nabla_w(N_h\phi), \nabla_w (Q_h v-N_h v)) + ((\bP_h-\bPi_h)\nabla v, \nabla_w (N_h\phi)) \\
  &\le \bigg(\| \bPi_h \nabla\phi - \bP_h\nabla\phi\|+\|\nabla_w (Q_h\phi-N_h\phi)\|\bigg) \| \nabla_w (Q_h v-N_h v)\|  \\
  &\qquad    + ((\bP_h-\bPi_h)\nabla v, \nabla_w (N_h\phi-Q_h\phi))+ ((\bP_h-\bPi_h)\nabla v, \bP_h\nabla \phi) \\
  & \lesssim h^{m+s}\|\phi\|_{1+s} \|v\|_{m+1} + ((I-\bPi_h)\nabla v, \bP_h\nabla \phi).
\end{aligned}
$$
Using integration by parts, the triangular inequality and the definition of $\bPi_h$, we have
\begin{equation} \label{eq:duality-component}
\begin{aligned}
  &((I-\bPi_h)\nabla v, \bP_h\nabla \phi) \\
  =& ((I-\bPi_h)\nabla v, (\bP_h-I)\nabla \phi) + ((I-\bPi_h)\nabla v, \nabla\phi) \\
  \lesssim& h^{m+s}\|\phi\|_{1+s} \|v\|_{m+1} + ((I-\bPi_h)\nabla v\cdot\vn,\phi)_{\partial\Omega} - (\nabla\cdot(I-\bPi_h)\nabla v, \phi) \\
  =& h^{m+s}\|\phi\|_{1+s} \|v\|_{m+1} + ((I-\bPi_h)\nabla v\cdot\vn,\phi-Q_b\phi)_{\partial\Omega} - ((I-Q_0) \Delta v, \phi) \\
  \lesssim& h^{m+s}\|\phi\|_{1+s} \|v\|_{m+1} + (h^{m-\frac12} \|v\|_{m+\frac12,\partial\Omega})
  (h^{\min(s+\frac12, j+1)}\|\phi\|_{s+\frac12,\partial\Omega}) \\
  &\qquad     - ((I-Q_0) \Delta v, (I-Q_0) \phi) \\
  \lesssim & h^{m+\min(s,j+\frac12)}\|\phi\|_{1+s} \|v\|_{m+1} + h^{1+s} \|\phi\|_{1+s}\|(I-Q_0)\Delta v\|.
\end{aligned}
\end{equation}
In the proof of (\ref{eq:duality-component}), we have used the fact
that $\Pi_h(\nabla v\cdot\vn)$ is exactly the $L^2$ projection of
$\nabla v\cdot\vn$ on $\partial\Omega$. Combining the above gives
$$
\begin{aligned}
\| Q_0 v-N_0 v\|^2 &\lesssim \bigg(h^{m+\min(s,j+\frac12)}\|v\|_{m+1}+h^{1+s}\|(I-Q_0)\Delta v\|\bigg)\|\phi\|_{1+s} \\
&\lesssim
\bigg(h^{m+\min(s,j+\frac12)}\|v\|_{m+1}+h^{1+s}\|(I-Q_0)\Delta
v\|\bigg)\| Q_0 v-N_0 v\|.
\end{aligned}
$$
This completes the proof of the estimate
(\ref{eq:Neumann-error-L2}). The inequality (\ref{eq:Ritz-error-L2})
can be verified in a similar way by considering a function $\phi\in
H_0^1(\Omega)$ satisfying a Poisson equation with homogeneous
Dirichlet boundary condition. Observe that in this case, the
boundary integral $((I-\bPi_h)\nabla
v\cdot\vn,\phi)_{\partial\Omega}$ in inequality
(\ref{eq:duality-component}) shall vanish due to the vanishing value
of $\phi$.
\end{proof}
\medskip

\begin{remark}
It is not hard to see from (\ref{eq:duality-component}) that for the
Neumann projection, if in addition we have $\frac{\partial
v}{\partial n}=0$ on $\partial\Omega$, then the term
$((I-\bPi_h)\nabla v\cdot\vn,\phi)_{\partial\Omega}$ vanishes and
one obtains the optimal order estimate of $h^{m+s}$ instead of
$h^{m+\min(s,j+\frac12)}$ for the Neumann projection operator.
\end{remark}

\begin{remark} \label{rem:fullregularity-elliptic}
If the Poisson equation has the full $H^2$ regularity in $\Omega$,
then for $v$ satisfying the assumptions of Lemma
\ref{lem:elliptic-error}, we have
$$
\begin{aligned}
  \| Q_0 v-R_0 v\| &\lesssim h^{m+1} \|v\|_{m+1} + h^2\|(I-Q_0)\Delta v\|\qquad \textrm{for }\frac{1}{2}< m\le j+1,\\[2mm]
  \| Q_0 v-N_0 v\| &\lesssim \begin{cases} h^{m+\frac12} \|v\|_{m+1}+ h^2\|(I-Q_0)\Delta v\|
    \quad &\textrm{for }j=0,\, \frac{1}{2}< m\le 1, \\ h^{m+1} \|v\|_{m+1}+ h^2\|(I-Q_0)\Delta v\| \quad &\textrm{for }j\ge 1,\, \frac{1}{2}< m\le j+1. \end{cases}
\end{aligned}
$$
Again, if in addition, $\frac{\partial v}{\partial n}=0$ on
$\partial\Omega$, then the Neumann projection has optimal order of
error estimates, even for $j=0$.
\end{remark}

\begin{remark} \label{rem:ellipticL2error}
The duality argument used in Lemma \ref{lem:elliptic-error} works
only for $\|Q_0 v- R_0 v\|$ and $\|Q_0 v- N_0 v\|$. For $\|Q_h v-
R_h v\|_{0,h}$ and $\|Q_h v- N_h v\|_{0,h}$ involving element
boundary information, we currently have only sub-optimal estimates.
More precisely, for $v$ satisfying the assumptions in Lemma
\ref{lem:elliptic-error}, the following estimates hold true.
  \begin{equation} \label{eq:elliptic-discrete-estimate}
    \begin{aligned}
    \| Q_h v-R_h v\|_{0,h} & \lesssim \|\nabla_w(Q_h v-R_h v)\| \lesssim h^{m} \|v\|_{m+1} \quad&&\textrm{for } \frac{1}{2}< m\le j+1,\\
    \| Q_h v-N_h v\|_{0,h} & \lesssim \|\nabla_w(Q_h v-N_h v)\| \lesssim h^{m} \|v\|_{m+1} \quad&&\textrm{for } \frac{1}{2}< m\le j+1.\\
    \end{aligned}
  \end{equation}
Although numerical experiments in \cite{MuWangWangYe} suggest an
optimal order of convergence in the $\|\cdot\|_{0,h}$ norm, it
remains to see if optimal order error estimates hold true or not
theoretically.
\end{remark}
\medskip

Another important observation is that, for sufficiently smooth $v$,
$\nabla_w R_h v$ is identical to the mixed finite element
approximation of $\nabla v$, discretized by using $RT_j$ and
discrete $P_j$ elements. Indeed, we have the following lemma:
\medskip
\begin{lemma}
For any $v\in H_0^1\cap H^{1+\gamma}(\Omega)$ with $\gamma>\frac12$
and $\Delta v\in L^2(\Omega)$, let $\vq_h\in \Sigma_h\cap
H(div,\Omega)$ and $v_0\in L^2(\Omega)$ be piecewise $P_j$
polynomials solving
\begin{equation} \label{eq:mixed-elliptic}
  \begin{cases}
  (\vq_h, \vchi_h) - (\nabla\cdot \vchi_h, v_0) = 0\quad & \forall\, \vchi_h \in \Sigma_h\cap H(div,\Omega), \\
  (\nabla\cdot \vq_h, \psi_0) = (\Delta v, \psi_0)\quad & \forall\, \psi_0 \in  L^2(\Omega)
  \textrm{ piecewise }P_j\textrm{ polynomials}.
  \end{cases}
\end{equation}
In other words, $\vq_h$ and $v_0$ are the mixed finite element
solution, discretized using the $RT_j$ element, to the Poisson
equation with homogeneous Dirichlet boundary condition for which $v$
is the exact solution. Then, one has $\nabla_w R_h v = \vq_h$.
\end{lemma}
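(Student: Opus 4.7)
The plan is to exhibit $(\vq_h^*, v_0^*) := (\nabla_w R_h v,\, -R_0 v)$ as a solution of the mixed system (\ref{eq:mixed-elliptic}) and then invoke uniqueness of the mixed finite element solution to conclude that $\vq_h = \nabla_w R_h v$.

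The key preliminary step is to show that $\vq_h^* \in \Sigma_h \cap H(\mathrm{div},\Omega)$. Membership in $\Sigma_h$ is automatic; the nontrivial task is to verify continuity of the normal trace across each interior edge $e$. I would test the Ritz identity (\ref{eq:Ritz}) with $\psi_h = \{0, \psi_b\}\in V_{0,h}$, where $\psi_b\in P_j(e)$ is supported on $e$ and vanishes on every other edge. Taking $\nabla_w R_h v|_K$ (respectively $\bPi_h\nabla v|_K$) as the test vector inside the definition (\ref{discrete-weak-gradient-new}) of $\nabla_w\psi_h$, and using $\psi_0\equiv 0$, both sides of the Ritz identity collapse to boundary integrals involving $\psi_b$. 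Only the two triangles adjacent to $e$ contribute, and since $\bPi_h\nabla v\in H(\mathrm{div},\Omega)$ its normal jump vanishes, leaving $\langle \psi_b,\, \ljump \nabla_w R_h v\cdot\bn\rjump\rangle_e = 0$ for arbitrary $\psi_b\in P_j(e)$. Because the jump itself lies in $P_j(e)$, it must vanish, giving $\vq_h^*\in H(\mathrm{div},\Omega)$.

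With $\vq_h^*\in \Sigma_h\cap H(\mathrm{div},\Omega)$ secured, the second equation of (\ref{eq:mixed-elliptic}) follows by testing Ritz with $\psi_h=\{\psi_0,0\}$ for arbitrary piecewise $P_j$ polynomial $\psi_0$: using $\nabla_w R_h v$ and $\bPi_h\nabla v$ as test vectors in (\ref{discrete-weak-gradient-new}) and applying the commuting property $\nabla\cdot\bPi_h\nabla v = Q_0\Delta v$ from (\ref{eq:bpi-prop}), both sides of Ritz collapse and one obtains $(\nabla\cdot \vq_h^*,\,\psi_0) = (\Delta v,\,\psi_0)$, with the $Q_0$ absorbed because $\psi_0$ is piecewise $P_j$. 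For the first equation, I would apply (\ref{discrete-weak-gradient-new}) directly with test $\vchi_h\in\Sigma_h\cap H(\mathrm{div},\Omega)$, sum over $K\in\T_h$, and exploit both continuity of $\vchi_h\cdot\bn$ across interior edges and $R_b v|_{\partial\Omega}=0$ to obtain $(\vq_h^*,\vchi_h) = -(R_0 v,\nabla\cdot\vchi_h) = (\nabla\cdot\vchi_h,\, v_0^*)$, as desired. The main obstacle is the first step — the jump-vanishing argument — which is what forges the bridge between the discontinuous weak Galerkin setup and the $H(\mathrm{div})$-conforming mixed framework; everything afterward is a clean consequence of the definitions of $\nabla_w$ and $\bPi_h$ together with uniqueness of the mixed finite element solution.
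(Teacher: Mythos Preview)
Your proof is correct and follows essentially the same strategy as the paper: establish $H(\mathrm{div})$-conformity of $\nabla_w R_h v$ via the jump argument using $\psi_h=\{0,\psi_b\}$, verify the second mixed equation by testing with $\psi_h=\{\psi_0,0\}$, and invoke uniqueness. The one minor difference is that you explicitly identify $v_0^*=-R_0 v$ and verify the first mixed equation directly from the definition of $\nabla_w$, whereas the paper appeals abstractly to surjectivity of $\nabla\cdot:\Sigma_h\cap H(\mathrm{div},\Omega)\to \{\text{piecewise }P_j\}$ to produce some $v_0$; your version is slightly more informative since it pins down the scalar variable as well.
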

\medskip
\begin{proof}
We first show that $\nabla_w R_h v \in \Sigma_h\cap H(div,\Omega)$
by verifying that $(\nabla_w R_h v)\cdot \vn$ is continuous across
internal edges. Let $e\in \E_h\backslash\partial\Omega$ be an
internal edge and $K_1$, $K_2$ be two triangles sharing $e$. Denote
$\bn_1$ and $\bn_2$ the outward normal vectors on $e$, with respect
to $K_1$ and $K_2$, respectively. Let $\psi_h \in V_{0,h}$ satisfy
$\psi_b|_e \neq 0$ and $\psi_0$, $\psi_b$ vanish elsewhere. By the
definition of $R_h$, $\nabla_w$ and the fact that $\bPi_h\nabla v\in
H(div,\Omega)$, we have
$$
\begin{aligned}
0 &= (\bPi_h\nabla v - \nabla_w R_h v, \nabla_w \psi_h) \\
&= (\bPi_h\nabla v - \nabla_w R_h v, \nabla_w \psi_h)_{K_1} + (\bPi_h\nabla v - \nabla_w R_h v, \nabla_w \psi_h)_{K_2} \\
&= ((\bPi_h\nabla v - \nabla_w R_h v)|_{K_1}\cdot \vn_1 + (\bPi_h\nabla v - \nabla_w R_h v)|_{K_2}\cdot \vn_2, \psi_b)_e \\
&= -(\nabla_w R_h v|_{K_1}\cdot \vn_1 + \nabla_w R_h v|_{K_2}\cdot
\vn_2, \psi_b)_e.
\end{aligned}
$$
The above equation holds true for all $\psi_b|_e\in P_j(e)$. Since
$\nabla_w R_h v|_{K_1}\cdot \vn_1 + \nabla_w R_h v|_{K_2}\cdot
\vn_2$ is also in $P_j(e)$, therefore it must be $0$. This completes
the proof of  $\nabla_w R_h v \in H(div,\Omega)$.

Next, we prove that $\nabla_w R_h v$ is identical to the solution
$\vq_h$ of (\ref{eq:mixed-elliptic}). Since the solution to
(\ref{eq:mixed-elliptic}) is unique, we only need to show that
$\nabla_w R_h v$, together with a certain $v_0$, satisfies both
equations in (\ref{eq:mixed-elliptic}). Consider the test function
$\psi_h\in V_{0,h}$ with the form $\psi_h = \{\psi_0,0\}$. By the
definition of $\nabla_w$, equations (\ref{eq:Ritz}) and
(\ref{div-q-homogeneous}), we have
$$
(\nabla\cdot\nabla_w R_h v, \psi_0) = - (\nabla_w R_h v, \nabla_w
\psi_h) = - (\bPi_h\nabla v, \nabla_w \psi_h) = (\Delta v, \psi_0).
$$
Hence $\nabla_w R_h v$ satisfies the second equation of
(\ref{eq:mixed-elliptic}). Now, note that $\nabla\cdot$ is an onto
operator from $\Sigma_h\cap H(div,\Omega)$ to the space of piecewise
$P_j$ polynomials, which allows us to define a $v_0$ that satisfies
the first equation in (\ref{eq:mixed-elliptic}) with $\vq_h$ set to
be $\nabla_w R_h v$. This completes the proof the the lemma.
\end{proof}
\medskip

\begin{remark}
Using the same argument and noticing that (\ref{eq:Neumann}) holds
for all $\psi_h\in V_h$, one can analogously prove that for $v\in
\bar{H}^1(\Omega)\cap H^{1+\gamma}(\Omega)$ with $\gamma>\frac12$
and $\Delta v\in L^2(\Omega)$,
$$
 \nabla_w N_h v \in \Sigma_h \cap H(div, \Omega),
$$
and
$$
\nabla\cdot \nabla_w N_h v = Q_0\Delta v.
$$
\end{remark}

Because $\nabla_w R_h v$ is identical to the mixed finite element
solution to the Poisson equation, by \cite{wang1, Gastaldi}, we have
the following quasi-optimal order $L^{\infty}$ estimate:
\begin{equation} \label{eq:maximumnormestimate1}
\|\nabla v - \nabla_w R_h v\|_{L^{\infty}(\Omega)} \lesssim
h^{n+1}|\ln h| \|\Delta v\|_{W^{n,\infty}(\Omega)},
\end{equation}
for $0\le n\le j$. Furthermore, for $j\ge 1$ and $v\in
W^{j+2,\infty}(\Omega)$, we have the following optimal order error
estimate
\begin{equation} \label{eq:maximumnormestimate2}
  \|\nabla v - \nabla_w R_h v\|_{L^{\infty}(\Omega)} \lesssim h^{n+1} \| v\|_{W^{n+2,\infty}(\Omega)},
\end{equation}
for $1\le n \le j$.

\medskip
Inspired by \cite{Scholz78}, using the above $L^{\infty}$ estimates
we obtain the following lemma, which will play an essential role in
the error analysis to be given in the next section.

\begin{lemma} \label{lem:Linf}
The following quasi-optimal and optimal order error estimates hold
true:
\begin{itemize}
\item[(i)] Let $0\le n \le j$ and $v\in H_0^1(\Omega)\cap W^{n+2,\infty}(\Omega)$. Then for all
$\phi_h = \{v_0, v_b\} \in V_h$, we have
\begin{equation} \label{eq:Linf1}
|(\bPi_h\nabla v - \nabla_w R_h v, \nabla_w \phi_h)| \lesssim
h^{n+\frac12} |\ln h| \| v\|_{W^{n+2,\infty}(\Omega)}
\|\phi_h\|_{0,h}.
\end{equation}
\item[(ii)] Let $j\ge 1$, $1\le n\le j$, and $v\in H_0^1(\Omega)\cap W^{n+2,\infty}(\Omega)$. Then,
for all $\phi_h = \{v_0, v_b\} \in V_h$ we have
\begin{equation} \label{eq:Linf2}
|(\bPi_h\nabla v - \nabla_w R_h v, \nabla_w \phi_h)| \lesssim
h^{n+\frac12} \| v\|_{W^{n+2,\infty}(\Omega)} \|\phi_h\|_{0,h}.
\end{equation}
\end{itemize}
\end{lemma}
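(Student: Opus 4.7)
The plan is to reduce the bilinear form $(\bPi_h\nabla v - \nabla_w R_h v,\nabla_w\phi_h)$ to a sum of integrals over the boundary edges $e\in\E_h\cap\partial\Omega$, and then estimate those using the $L^\infty$ bounds (\ref{eq:maximumnormestimate1})--(\ref{eq:maximumnormestimate2}) on $\nabla v - \nabla_w R_h v$ together with a discrete trace estimate on $\phi_b$ restricted to $\partial\Omega$.

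First, for each $K\in\T_h$, since $\vq:=(\bPi_h\nabla v - \nabla_w R_h v)|_K\in RT_j(K)$, I would apply the definition (\ref{discrete-weak-gradient-new}) of $\nabla_w\phi_h$ with this choice of $\vq$ to get
\begin{equation*}
(\bPi_h\nabla v - \nabla_w R_h v,\nabla_w\phi_h)_K = -(\phi_0, \nabla\cdot\vq)_K + \langle \phi_b,\vq\cdot\bn\rangle_{\partial K}.
\end{equation*}
By (\ref{eq:bpi-prop}) we have $\nabla\cdot\bPi_h\nabla v = Q_0\Delta v$, and the lemma identifying $\nabla_w R_h v$ with the classical $RT_j$ mixed flux, combined with $\nabla\cdot\nabla_w R_h v\in P_j(K_0)$, forces $\nabla\cdot\nabla_w R_h v = Q_0\Delta v$ as well. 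Hence $\nabla\cdot\vq=0$ element-wise. Since $\bPi_h\nabla v$ and $\nabla_w R_h v$ both lie in $\Sigma_h\cap H(div,\Omega)$, the field $\vq$ has continuous normal components across every interior edge. Summing over $K\in\T_h$ and cancelling the interior edge contributions (each paired with opposite outward normals but the same $\phi_b$) yields
\begin{equation*}
(\bPi_h\nabla v - \nabla_w R_h v,\nabla_w\phi_h) = \sum_{e\in\E_h\cap\partial\Omega}\langle \phi_b, (\bPi_h\nabla v - \nabla_w R_h v)\cdot\bn\rangle_e.
\end{equation*}

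Next, I would bound the boundary sum by $\|\bPi_h\nabla v - \nabla_w R_h v\|_{L^\infty(\partial\Omega)}\,\|\phi_b\|_{L^1(\partial\Omega)}$. Via the triangle inequality, $\|\bPi_h\nabla v - \nabla_w R_h v\|_{L^\infty}\le \|\nabla v - \bPi_h\nabla v\|_{L^\infty} + \|\nabla v - \nabla_w R_h v\|_{L^\infty}$; the first term is controlled by $h^{n+1}\|v\|_{W^{n+2,\infty}(\Omega)}$ via (\ref{eq:bpi-prop-2}) with $p=\infty$, and the second by (\ref{eq:maximumnormestimate1}) for part (i) (producing the $|\ln h|$ factor) or by (\ref{eq:maximumnormestimate2}) for part (ii) (no logarithm). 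For the boundary values of $\phi_h$, I would apply H\"older's inequality and the boundedness of $|\partial\Omega|$ to pass to $L^2(\partial\Omega)$, then combine the triangle inequality $\|\phi_b\|_{L^2(\partial\Omega)}\le \|\phi_0-\phi_b\|_{L^2(\partial\Omega)}+\|\phi_0\|_{L^2(\partial\Omega)}$ with the discrete trace inequality (\ref{boundary2interior-discrete}) applied to $\phi_0$ and the definition of $\|\cdot\|_{0,h}$ to obtain $\|\phi_b\|_{L^1(\partial\Omega)}\lesssim \|\phi_b\|_{L^2(\partial\Omega)}\lesssim h^{-1/2}\|\phi_h\|_{0,h}$. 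Multiplying the two factors gives the stated $h^{n+1/2}$ rate (with or without the $|\ln h|$ factor, as appropriate).

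The main obstacle is the initial reduction to boundary integrals: it relies on assembling the two divergence identities $\nabla\cdot\bPi_h\nabla v = \nabla\cdot\nabla_w R_h v = Q_0\Delta v$ with the $H(div)$-conformity of both fields, and combining them carefully with the definition of $\nabla_w$ so that all interior-edge contributions cancel. Both ingredients are in hand from results earlier in this section, but the cancellation must be performed with care because the test field $\vq$ depends on $v$ rather than on $\phi_h$. Once the problem is localized to $\partial\Omega$, the remaining step is a routine pairing of the standard $L^\infty$ estimates from the mixed finite element literature with the discrete trace tools developed in Section \ref{sec:tools}.
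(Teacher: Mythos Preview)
Your argument is correct and reaches the same boundary reduction as the paper, but by a different route. The paper observes directly from the definition (\ref{eq:Ritz}) of $R_h$ that $(\bPi_h\nabla v-\nabla_w R_hv,\nabla_w\psi_h)=0$ for every $\psi_h\in V_{0,h}$, so without loss of generality one may assume $\phi_h$ is supported only on the boundary edges; expanding $\nabla_w\phi_h$ then gives $\sum_{e\subset\partial\Omega}\langle\phi_b,\vq_h\cdot\bn\rangle_e$ immediately. You instead appeal to the structural facts $\nabla\cdot\bPi_h\nabla v=\nabla\cdot\nabla_w R_hv=Q_0\Delta v$ and the $H(div)$-conformity of both fields (established in the preceding lemma) to kill the interior volume and edge contributions. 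Both reductions are valid; the paper's is shorter because it uses only the defining orthogonality of $R_h$, while yours leans on the deeper identification of $\nabla_w R_hv$ with the mixed Raviart--Thomas flux. For the boundary estimate, the paper bounds each edge term in $L^\infty\times L^\infty$, passes to $\|\phi_h\|_{0,h,K_e}$ by scaling, and then applies Cauchy--Schwarz over the $O(h^{-1})$ boundary elements to extract the factor $h^{-1/2}$; your $L^\infty$--$L^1$ pairing followed by $\|\phi_b\|_{L^2(\partial\Omega)}\lesssim h^{-1/2}\|\phi_h\|_{0,h}$ is an equally clean alternative.
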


\begin{proof}
We first prove part $(i)$. Denote by $\E_{\partial\Omega}$ the set
of all edges in $\E_h\cap \partial\Omega$. For any $e\in
\E_{\partial\Omega}$, let $K_e$ be the only triangle in $\T_h$ that
has $e$ as an edge. Denote by $\T_{\partial\Omega}$ the set of all
$K_e$, for $e\in \E_{\partial\Omega}$. For simplicity of notation,
denote $\vq_h = \bPi_h\nabla v - \nabla_w R_h v$. Since
$(\bPi_h\nabla v - \nabla_w R_h v, \nabla_w \psi_h) = 0$ for all
$\psi_h \in V_{0,h}$, without loss of generality, we only need to
consider $\phi_h$ that vanishes on the interior of all triangles and
all internal edges. Then by the definition of $\phi_h$ and
$\nabla_w$, the scaling argument, and the Schwarz inequality,
$$
\begin{aligned}
  |(\bPi_h\nabla v - \nabla_w R_h v, \nabla_w \phi_h)| &= \left|
  \sum_{K_e\in \T_{\partial\Omega}} (\vq_h, \nabla_w (\phi_b|_e))_{K_e}\right| \\
  &= \left|\sum_{e\in \E_{\partial\Omega}} (\phi_b, \vq_h\cdot\vn)_e \right| \\
  & \lesssim \sum_{e\in \E_{\partial\Omega}} h \|\phi_b\|_{L^{\infty}(e)} \|\vq_h\|_{L^{\infty}(e)} \\
  & \lesssim \|\vq_h\|_{L^{\infty}(\Omega)}  \sum_{e\in \E_{\partial\Omega}} h \left(\|\phi_0\|_{L^{\infty}(K_e)} + \|\phi_0-\phi_b\|_{L^{\infty}(e)} \right) \\
  & \lesssim \|\vq_h\|_{L^{\infty}(\Omega)} \sum_{K_e\in \T_{\partial\Omega}} \|\phi_h\|_{0,h,K_e} \\
  & \lesssim \|\vq_h\|_{L^{\infty}(\Omega)} \left(\sum_{K_e\in \T_{\partial\Omega}} \|\phi_h\|^2_{0,h,K_e}\right)^{\frac12} \left(\sum_{K_e\in \T_{\partial\Omega}} 1 \right)^{\frac12}  \\
  & \lesssim h^{-\frac12} \|\vq_h\|_{L^{\infty}(\Omega)} \|\phi_h\|_{0,h}.
\end{aligned}
$$
Now, by inequalities (\ref{eq:bpi-prop-2}) and (\ref{eq:maximumnormestimate1}), we have
$$
\begin{aligned}
  \|\vq_h\|_{L^{\infty}(\Omega)} &\le \|\nabla v - \bPi_h\nabla v\|_{L^{\infty}(\Omega)} + \|\nabla v - \nabla_w R_h v\|_{L^{\infty}(\Omega)} \\
  &\lesssim h^{n+1}\| v\|_{W^{n+2,\infty}(\Omega)} +  h^{n+1}|\ln h| \|\Delta v\|_{W^{n,\infty}(\Omega)},
\end{aligned}
$$
for $0\le n\le j$. This completes the proof of part $(i)$.

The proof for part $(ii)$ is similar. One simply needs to replace
inequality (\ref{eq:maximumnormestimate1}) by
(\ref{eq:maximumnormestimate2}) in the estimation of
$\|\vq_h\|_{L^{\infty}(\Omega)}$.
\end{proof}

\section{Error analysis} \label{sec:erroranalysis}
The main purpose of this section is to analyze the approximation
error of the weak Galerkin formulation (\ref{eq:wg}). For
simplicity, in this section, we assume that the solution of
(\ref{eq:wg}) satisfies $u\in H^{3+\gamma}(\Omega)$ and $w\in
H^{1+\gamma}(\Omega)$, where $\gamma>\frac{1}{2}$. This is not an
unreasonable assumption, as we know from (\ref{eq:regularity}), the
solution $u$ can have up to $H^4$ regularity as long as $\Omega$
satisfies certain conditions. However, our assumption does not
include all the possible cases for the biharmonic equation.

Testing $w = -\Delta u$ with $\phi_h=\{\phi_0,\phi_b\}\in V_h$ and
then by using (\ref{div-q-homogeneous}) we have
\begin{equation} \label{eq:test-1}
\ldp w, \phi_h\rdp = (w,\phi_0) = - (\nabla\cdot\nabla u, \phi_0) =
(\bPi_h\nabla u,\nabla_w \phi_h).
\end{equation}
Similarly, testing $-\Delta w = f$ with $\psi_h=\{\psi_0,\psi_b\}\in V_{0,h}$ gives
\begin{equation} \label{eq:test-2}
(\bPi_h\nabla w,\nabla_w\psi_h) = (f, \psi_0).
\end{equation}
Comparing (\ref{eq:test-1})-(\ref{eq:test-2}) with the weak Galerkin
form (\ref{eq:wg}), one immediately sees that there is a consistency
error between them. Indeed, since $V_h$ and $V_{0,h}$ are not
subspaces of $H^1(\Omega)$ and $H_0^1(\Omega)$, respectively, the
weak Galerkin method is non-conforming. Therefore, we would like to
first rewrite (\ref{eq:test-1})-(\ref{eq:test-2}) into a form that
is more compatible with (\ref{eq:wg}). By using (\ref{eq:Ritz}) and
(\ref{eq:Neumann}), equations (\ref{eq:test-1})-(\ref{eq:test-2})
can be rewritten as
\begin{equation} \label{eq:mixed}
  \begin{cases}
    \ldp N_h w, \phi_h\rdp - (\nabla_w R_h u,\nabla_h \phi_h) = E(w,u,\phi_h), \\
    (\nabla_w N_h w, \nabla_w\psi_h) = (f,\psi_0),
  \end{cases}
\end{equation}
where
$$
  E(w,u,\phi_h) = \ldp N_h w - w, \phi_h\rdp + (\bPi_h\nabla u -\nabla_w R_h u, \nabla_w \phi_h).
$$

Define $\varepsilon_u = R_h u - u_h\in V_{0,h}$ and $\varepsilon_w = N_h w - w_h \in V_h$. By subtracting (\ref{eq:mixed})
from (\ref{eq:wg}), we have
\begin{equation} \label{eq:error}
  \begin{cases}
    \ldp \varepsilon_w, \phi_h\rdp - (\nabla_w \varepsilon_u,\nabla_h \phi_h) = E(w,u,\phi_h)\qquad &\textrm{for all }\phi_h\in V_h, \\
    (\nabla_w \varepsilon_w, \nabla_w\psi_h) = 0\qquad &\textrm{for all }\psi_h\in V_{0,h}.
  \end{cases}
\end{equation}
Notice here $(\nabla_w \varepsilon_w, \nabla_w\psi_h) = 0$ does not
necessarily imply $\varepsilon_w = 0$, since the equation only holds
for all $\psi_h\in V_{0,h}$ while $\varepsilon_w$ is in $V_h$.

\medskip
\begin{lemma} \label{lem:E1E2}
The consistency error $E(w,u,\phi_h)$ is small in the sense that
$$
|E(w,u,\phi_h)| \lesssim h^m \|w\|_{m+1} \| \phi_h\|_{0,h} + h^{n+\frac12} |\ln h| \| u\|_{W^{n+2,\infty}(\Omega)} \|\phi_h\|_{0,h},
$$
where $\frac{1}{2}<m\le j+1$ and $0\le n\le j$.
Moreover, for $j\ge 1$, we have the improved estimate
$$
    |E(w,u,\phi_h)| \lesssim h^{m} \|w\|_{m+1} \| \phi_h\|_{0,h} + h^{n+\frac12}  \| u\|_{W^{n+2,\infty}(\Omega)} \|\phi_h\|_{0,h},
$$
where $\frac12< m \le j+1$ and $1\le n\le j$.
\end{lemma}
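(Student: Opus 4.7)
The definition
\[
E(w,u,\phi_h) \;=\; \ldp N_h w - w,\,\phi_h\rdp \;+\; (\bPi_h\nabla u - \nabla_w R_h u,\,\nabla_w\phi_h)
\]
splits the consistency error into two independent pieces, and my plan is to estimate them separately by applying the machinery developed in Section \ref{sec:tools} essentially off the shelf. The gradient piece is immediate: since $u\in H_0^1(\Omega)\cap W^{n+2,\infty}(\Omega)$ (which is inherited from the assumed smoothness of $u$), Lemma \ref{lem:Linf} part (i) yields directly
\[
|(\bPi_h\nabla u-\nabla_w R_h u,\,\nabla_w\phi_h)|\lesssim h^{n+\frac12}|\ln h|\,\|u\|_{W^{n+2,\infty}(\Omega)}\,\|\phi_h\|_{0,h}
\]
for $0\le n\le j$, and Lemma \ref{lem:Linf} part (ii) removes the $|\ln h|$ factor when $j\ge 1$ and $1\le n\le j$. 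This handles the second half of the claim.

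For the inner product piece I will insert the $L^2$-type projection $Q_h w$ and write
\[
\ldp N_h w - w,\,\phi_h\rdp \;=\; \ldp N_h w - Q_h w,\,\phi_h\rdp \;+\; \ldp Q_h w - w,\,\phi_h\rdp,
\]
then apply the Cauchy-Schwarz inequality with respect to the discrete inner product $\ldp\cdot,\cdot\rdp$, whose induced norm is exactly $\|\cdot\|_{0,h}$. The first term is then bounded by $\|N_h w - Q_h w\|_{0,h}\|\phi_h\|_{0,h}$, and Remark \ref{rem:ellipticL2error} (the sub-optimal $\|\cdot\|_{0,h}$ estimate \eqref{eq:elliptic-discrete-estimate}) gives $\|N_h w - Q_h w\|_{0,h}\lesssim h^m\|w\|_{m+1}$ for $\tfrac12<m\le j+1$. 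The second term is bounded by $\|w-Q_h w\|_{0,h}\|\phi_h\|_{0,h}$, and Lemma \ref{lem:Qh} yields $\|w-Q_h w\|_{0,h}\lesssim h^m\|w\|_{m}\le h^m\|w\|_{m+1}$.

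A small bookkeeping point that I want to verify before applying Cauchy-Schwarz is that $\ldp Q_h w - w,\phi_h\rdp$ makes sense: this is exactly the situation described at the end of Section \ref{sec:weakGalerkin} where a smooth function $w$ is identified with the pair $\{w|_{K_0},\,w|_e\}$, so the edge differences in the definition of $\ldp\cdot,\cdot\rdp$ simply become $(Q_0 w - Q_b w)$ after cancellation of the $w$-terms, and $\ldp Q_h w - w,Q_h w - w\rdp = \|Q_h w - w\|_{0,h}^2$, as recorded in the proof of Lemma \ref{lem:Qh}. Adding the two bounds on $\ldp N_h w - w,\phi_h\rdp$ to the gradient estimate from Lemma \ref{lem:Linf} produces both versions of the claimed inequality. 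There is no real obstacle here: the work of Section \ref{sec:tools} was precisely designed so that this splitting-plus-Cauchy-Schwarz argument goes through, with the only mildly delicate point being the use of the sub-optimal estimate \eqref{eq:elliptic-discrete-estimate} in the discrete norm, which is the reason the exponent is $h^m$ rather than a higher power.
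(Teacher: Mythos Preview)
Your proof is correct and follows exactly the route the paper intends: the paper's own proof is a single sentence listing the Schwarz inequality, Lemma~\ref{lem:Qh}, Remark~\ref{rem:ellipticL2error}, and Lemma~\ref{lem:Linf} as the ingredients, and you have applied precisely those four tools in the obvious way. The insertion of $Q_h w$ and the check that $\ldp Q_h w - w,\phi_h\rdp$ is well-defined and amenable to Cauchy--Schwarz are the appropriate fill-ins for the paper's word ``straightforward.''
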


\begin{proof}
The proof is straight forward by using the Schwarz inequality, Lemma \ref{lem:Qh}, Remark \ref{rem:ellipticL2error},
and Lemma \ref{lem:Linf}.
\end{proof}
\medskip

To derive an error estimate from (\ref{eq:error}), let us recall the
standard theory for mixed finite element methods. Given two bounded
bilinear forms $a(\cdot, \cdot)$ defined on $X\times X$ and
$b(\cdot, \cdot)$ defined on $X\times M$, where $X$ and $M$ are
finite dimensional spaces. Denote $X_0\subset X$ by
$$
X_0 = \{ \phi\in X:\: b(\phi, \psi)= 0\textrm{ for all }\psi\in M\}.
$$
Then for all $\chi\in X$ and $\xi\in M$,
$$
\sup_{\phi\in X,\, \psi\in M} \frac{a(\chi,\phi) + b(\phi, \xi) + b(\chi,\psi)}{\|\phi\|_X + \|\psi\|_M} \gtrsim \|\chi\|_X + \|\xi\|_M,
$$
if and only if
\begin{equation} \label{eq:inf-sup}
\begin{aligned}
  \sup_{\phi\in X_0} \frac{a(\chi,\phi)}{\|\phi\|_X} &\gtrsim \|\chi\|_X, \qquad &&\textrm{for all } \chi\in X_0, \\
  \sup_{\phi\in X} \frac{b(\phi, \xi)}{\|\phi\|_X} &\gtrsim \|\xi\|_M, \qquad &&\textrm{for all } \xi\in M.
\end{aligned}
\end{equation}

In our formulation, we set $X = V_h$ with norm $\|\cdot\|_{0,h}$ and
$M=V_{0,h}$ with norm $\3bar\cdot\3bar$. Define
$$
a(\chi,\phi) = \ldp \chi,\phi\rdp,\qquad b(\phi,\xi) = -(\nabla_w
\phi, \nabla_w\xi).
$$
It is not hard to check that both of these bilinear forms are bounded under the given norms.
In particular, the boundedness of $b(\cdot, \cdot)$ has been given in (\ref{eq:3barnorm-1}).
It is also clear that the first inequality in (\ref{eq:inf-sup}) follows from the definition of $a(\cdot,\cdot)$ and $\|\cdot\|_{0,h}$,
and the second inequality follows directly from (\ref{eq:3barnorm-2}).
Combine the above, we have for all $\chi\in V_h$ and $\xi\in V_{0,h}$,
\begin{equation} \label{eq:infsup-biharmonic}
\sup_{\phi\in V_h,\, \psi\in V_{0,h}} \frac{\ldp \chi,\phi\rdp
-(\nabla_w \phi, \nabla_w \xi) - (\nabla_w \chi,\nabla_w
\psi)}{\|\phi\|_{0,h} + \3bar \psi\3bar}
 \gtrsim \|\chi\|_{0,h} + \3bar \xi\3bar.
\end{equation}

\medskip
\begin{theorem} \label{thm:errH1}
The weak Galerkin formulation (\ref{eq:wg}) for the biharmonic problem (\ref{pde}) has the following error estimate:
$$
\|\varepsilon_w\|_{0,h} + \3bar \varepsilon_u\3bar \lesssim h^m \|w\|_{m+1} + h^{n+\frac12} |\ln h| \| u\|_{W^{n+2,\infty}(\Omega)},
$$
where $\frac{1}{2}<m\le j+1$ and $0\le n\le j$.
Moreover, for $j\ge 1$, we have the improved estimate
  $$
  \|\varepsilon_w\|_{0,h} + \3bar \varepsilon_u\3bar \lesssim h^{m} \|w\|_{m+1} + h^{n+\frac12} \| u\|_{W^{n+2,\infty}(\Omega)},
  $$
where $\frac12< m\le j+1$ and $1\le n\le j$.
\end{theorem}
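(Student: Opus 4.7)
The plan is to apply the abstract inf-sup stability result \eqref{eq:infsup-biharmonic} directly to the error pair $(\varepsilon_w, \varepsilon_u) \in V_h \times V_{0,h}$, reducing the theorem to bounding the consistency residual $E(w,u,\phi_h)$ uniformly in $\phi_h$. This is the natural strategy in the Babu\v{s}ka--Osborn--Pitk\"aranta framework once the Brezzi-type conditions encoded in \eqref{eq:infsup-biharmonic} are in hand.

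First I would take $\chi = \varepsilon_w$ and $\xi = \varepsilon_u$ in \eqref{eq:infsup-biharmonic}, which yields
\begin{equation*}
\|\varepsilon_w\|_{0,h} + \3bar \varepsilon_u\3bar \;\lesssim\; \sup_{\phi\in V_h,\, \psi\in V_{0,h}} \frac{\ldp \varepsilon_w,\phi\rdp - (\nabla_w \phi, \nabla_w \varepsilon_u) - (\nabla_w \varepsilon_w, \nabla_w \psi)}{\|\phi\|_{0,h} + \3bar \psi\3bar}.
\end{equation*}
The second equation of the error system \eqref{eq:error} kills the last term in the numerator (for every $\psi\in V_{0,h}$), while the first equation rewrites the remaining numerator as precisely the consistency residual $E(w,u,\phi)$. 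Therefore
\begin{equation*}
\|\varepsilon_w\|_{0,h} + \3bar \varepsilon_u\3bar \;\lesssim\; \sup_{\phi\in V_h} \frac{|E(w,u,\phi)|}{\|\phi\|_{0,h}}.
\end{equation*}

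At this point I would invoke Lemma \ref{lem:E1E2}, which supplies the bounds
\begin{equation*}
|E(w,u,\phi)| \;\lesssim\; \bigl(h^m \|w\|_{m+1} + h^{n+\frac12}|\ln h|\, \|u\|_{W^{n+2,\infty}(\Omega)}\bigr)\|\phi\|_{0,h}
\end{equation*}
for $\tfrac12 < m \le j+1$ and $0 \le n \le j$, and the improved version (without the $|\ln h|$ factor) for $j \ge 1$ and $1 \le n \le j$. Dividing by $\|\phi\|_{0,h}$ and taking the supremum produces exactly the two claimed estimates of the theorem.

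The only real content is already packaged upstream, so there should be no serious obstacle inside this proof itself. The main things to be careful about are the bookkeeping of the test-function norms (remembering that $\|\phi\|_{0,h} \le \|\phi\|_{0,h} + \3bar\psi\3bar$ so one is free to take the sup over $V_h$ alone once the $\psi$-dependent term vanishes) and the fact that $(\nabla_w \varepsilon_w, \nabla_w \psi_h) = 0$ holds only for $\psi_h \in V_{0,h}$, which is exactly the space appearing in \eqref{eq:infsup-biharmonic}, so the cancellation is legitimate. All the hard analytic work — the inf-sup stability, the $L^\infty$ estimate via the Scholz-type argument used in Lemma \ref{lem:Linf}, and the projection error estimates — was done in the preceding sections.
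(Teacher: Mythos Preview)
Your proof is correct and follows exactly the same route as the paper: apply the inf-sup bound \eqref{eq:infsup-biharmonic} to $(\varepsilon_w,\varepsilon_u)$, use the error equations \eqref{eq:error} to reduce the numerator to $E(w,u,\phi_h)$, and finish with Lemma~\ref{lem:E1E2}. Your extra remarks about the bookkeeping (the $\psi$-term vanishing and the legitimacy of restricting the supremum) are accurate and simply make explicit what the paper leaves implicit.
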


\begin{proof}
  By (\ref{eq:error}) and (\ref{eq:infsup-biharmonic}),
$$
  \begin{aligned}
    \|\varepsilon_w\|_{0,h}+ \3bar \varepsilon_u\3bar
    &\lesssim \sup_{\phi_h\in V_h,\, \psi_h\in V_{0,h}} \frac{\ldp \varepsilon_w,\phi_h\rdp -(\nabla_w \phi_h, \nabla_w \varepsilon_u)
                         - (\nabla_w \varepsilon_w,\nabla_w \psi_h)}{\|\phi_h\|_{0,h} + \3bar \psi_h\3bar} \\
    &= \sup_{\phi_h\in V_h,\, \psi_h\in V_{0,h}} \frac{E(w, u, \phi_h)}{\|\phi_h\|_{0,h} + \3bar \psi_h\3bar}.
  \end{aligned}
$$
Combining this with Lemma \ref{lem:E1E2}, this completes the proof of the theorem.
\end{proof}
\medskip

\begin{remark}
Assume that the exact solution $w$ and $u$ are sufficiently smooth.
It follows from the above theorem that the following convergence
holds true
$$
 \|\varepsilon_w\|_{0,h} + \3bar \varepsilon_u\3bar \lesssim \begin{cases}
O(h^{\frac12}|\ln h|)\quad &\textrm{for }j=0,\\
O(h^{j+\frac12})\quad &\textrm{for }j\ge 1.
\end{cases}
$$

\end{remark}

At this stage, it is standard to use the duality argument and derive
an error estimation for the $L^2$ norm of $\varepsilon_u$. However,
estimating $\|\varepsilon_u\|_{0,h}$ is not an easy task, as is
similar to the case of Poisson equations. For simplicity, we only
consider $\|\varepsilon_{u,0}\|$, where $\varepsilon_u$ is
conveniently expressed as $\varepsilon_u =
\{\varepsilon_{u,0},\varepsilon_{u,b}\}$. Define
\begin{equation} \label{eq:dualproblem}
\begin{cases}
\xi + \Delta \eta = 0, \\
-\Delta \xi = \varepsilon_{u,0},
\end{cases}
\end{equation}
where $\eta=0$ and $\frac{\partial \eta}{\partial \bn} = 0$ on $\partial \Omega$.
We assume that all internal angles of $\Omega$ are less than $126.283696\cdots^\circ$.
Then, according to (\ref{eq:regularity}), the solution to (\ref{eq:dualproblem}) has $H^4$ regularity:
$$
\|\xi\|_{2} + \|\eta\|_{4} \lesssim \|\varepsilon_{u,0}\|.
$$
Furthermore, since such a domain $\Omega$ is convex, the Poisson equation with either the homogeneous Dirichlet boundary condition
or the homogeneous Neumann boundary condition has $H^2$ regularity.

Clearly, Equation (\ref{eq:dualproblem}) can be written into the following form:
\begin{equation} \label{eq:dualweakform}
\begin{cases}
\ldp N_h\xi,\,\phi_h\rdp -(\nabla_w R_h\eta,\, \nabla_w\phi_h)=E(\xi,\eta,\phi_h) \; &\textrm{for all }\phi_h = \{\phi_0,\, \phi_b\}\in V_h,\\
(\nabla_w N_h\xi,\,\nabla_w\psi_h)=(\varepsilon_{u,0},\,\psi_0) \;
&\textrm{for all }\psi_h = \{\psi_0,\, \psi_b\}\in V_{0,h}.
\end{cases}
\end{equation}

For simplicity of the notation, denote
$$
\Lambda(N_h\xi, R_h\eta_h;\, \phi_h, \psi_h) = \ldp
N_h\xi,\,\phi_h\rdp -(\nabla_w R_h\eta,\, \nabla_w\phi_h) -
(\nabla_w N_h\xi,\,\nabla_w\psi_h).
$$
Note that $\Lambda$ is a symmetric bilinear form.
By setting $\phi_h = \varepsilon_w$ and $\psi_h = \varepsilon_u$ in (\ref{eq:dualweakform}) and then subtract these two equations,  one get
\begin{equation} \label{eq:EE}
\begin{aligned}
\|\varepsilon_{u,0}\|^2 &= E(\xi,\eta,\varepsilon_w) -\Lambda(N_h\xi, R_h\eta;\, \varepsilon_w, \varepsilon_u) \\
&= E(\xi,\eta,\varepsilon_w) -\Lambda(\varepsilon_w, \varepsilon_u;\, N_h\xi, R_h\eta) \\
&= E(\xi, \eta,\varepsilon_w) - E(w,u, N_h\xi) .
\end{aligned}
\end{equation}
Here we have used the symmetry of $\Lambda(\cdot,\cdot)$ and Equation (\ref{eq:error}).

The two terms, $E(\xi, \eta,\varepsilon_w)$ and $E(w,u, N_h\xi)$, in the right-hand side of Equation (\ref{eq:EE}) will be estimated one by one.
We start from $E(\xi, \eta,\varepsilon_w)$.
By using Lemma \ref{lem:E1E2}, it follows that
\begin{itemize}
\item[(i)] When $j=0$,
\begin{equation} \label{eq:EE1}
\begin{aligned}
E(\xi, \eta,\varepsilon_w) &\lesssim \left(h \|\xi\|_{2} + h^{\frac12} |\ln h| \|\eta\|_{W^{2,\infty}(\Omega)} \right) \|\varepsilon_w\|_{0,h} \\
&\lesssim h^{1/2}|\ln h|\left( \|\xi\|_{2} + \|\eta\|_{4}\right) \|\varepsilon_w\|_{0,h}.
\end{aligned}
\end{equation}
\item[(ii)] When $j\ge 1$, let $\delta>0$ be an infinitely small number which ensures
the Sobolev embedding from $W^{4,2}(\Omega)$ to $W^{3-\delta,\infty}(\Omega)$.
Then
\begin{equation}\label{eq:EE2}
\begin{aligned}
  E(\xi, \eta,\varepsilon_w) &\lesssim \left(h \|\xi\|_{2} + h^{\frac{3}{2}-\delta}|\ln h|  \|\eta\|_{W^{3-\delta,\infty}(\Omega)} \right) \|\varepsilon_w\|_{0,h}  \\
&\lesssim h\left( \|\xi\|_{2} + \|\eta\|_{4}\right) \|\varepsilon_w\|_{0,h}.
\end{aligned}
\end{equation}
\end{itemize}

Next, we give an estimate for $E(w,u, N_h\xi)$.
\medskip
\begin{lemma} \label{lem:EE3}
Assume all internal angles of $\Omega$ are less than $126.283696\cdots^\circ$,
which means the biharmonic problem with clamped boundary condition in $\Omega$ has $H^4$ regularity.
Then
\begin{itemize}
\item[(i)] For $j=0$,
$$
E(w,u, N_h\xi) \lesssim \left( h^{m+\frac12}\|w\|_{m+1} + h^2\|(I-Q_0)f\| + h^{n+1}\|u\|_{n+1} \right)\|\xi\|_2,
$$
where $\frac12<m\le 1$ and $1/2 < n\le 1$.
\item[(ii)] For $j\ge 1$,
  $$
  E(w,u, N_h\xi) \lesssim \left( h^{m+1}\|w\|_{m+1} + h^2\|(I-Q_0)f\| + h^{n+1}\|u\|_{n+1} \right)\|\xi\|_2,
  $$
where $\frac12< m\le j+1$ and $1/2 < n\le j+1$.
\end{itemize}
\end{lemma}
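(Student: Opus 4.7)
The plan is to expand $E(w,u,N_h\xi)$ using its definition into two natural pieces and estimate them independently, exploiting the regularity of the dual variable $\xi$ (which lies in $H^2$ because of the convexity assumption on $\Omega$) to gain an extra half or full order over Lemma \ref{lem:E1E2}. Specifically, I write
$$
E(w,u,N_h\xi) \;=\; \underbrace{\ldp N_h w - w,\, N_h\xi\rdp}_{T_1} \;+\; \underbrace{(\bPi_h\nabla u - \nabla_w R_h u,\, \nabla_w N_h\xi)}_{T_2}.
$$

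For $T_1$, I would insert $Q_h\xi$ and write $T_1 = \ldp N_h w - w, Q_h\xi\rdp + \ldp N_h w - w, N_h\xi - Q_h\xi\rdp$. In the first piece, further decompose $N_h w - w = (N_h w - Q_h w) + (Q_h w - w)$; the interior contribution of the $(Q_h w - w)$ part vanishes because $Q_0\xi$ is a polynomial $L^2$-orthogonal to $Q_0 w - w$, while the boundary part is controlled by Lemma \ref{lem:Qh}. The remaining interior term $(N_0 w - Q_0 w, Q_0\xi)$ is bounded by the sharp $L^2$ Neumann projection estimate of Remark \ref{rem:fullregularity-elliptic}, which relies on the $H^2$ regularity of the Poisson-Neumann problem in the convex domain and which is exactly responsible for the $h^{m+\frac12}\|w\|_{m+1} + h^2\|(I-Q_0)f\|$ (for $j=0$) or $h^{m+1}\|w\|_{m+1}+h^2\|(I-Q_0)f\|$ (for $j\ge 1$) factor. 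The second piece is handled by Cauchy--Schwarz in the $\|\cdot\|_{0,h}$ inner product, using the $H^1$-type Neumann estimate \eqref{eq:Neumann-error-H1} (plus discrete Poincar\'e) to get $\|N_h\xi - Q_h\xi\|_{0,h} \lesssim h\|\xi\|_2$ and the earlier $L^2$ bound on $\|N_h w - w\|_{0,h}$.

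For $T_2$, I would use the commutativity $\nabla_w Q_h\xi = \bP_h\nabla\xi$ from (\ref{commutative}) and split
$$
T_2 = (\bPi_h\nabla u - \nabla_w R_h u,\, \nabla_w(N_h\xi - Q_h\xi)) + (\bPi_h\nabla u - \nabla_w R_h u,\, \bP_h\nabla\xi).
$$
The first summand is immediately handled by Cauchy--Schwarz, the $H^1$-type Ritz estimate \eqref{eq:Ritz-error-H1} combined with \eqref{a2} (yielding $\|\bPi_h\nabla u - \nabla_w R_h u\| \lesssim h^n\|u\|_{n+1}$), and $\|\nabla_w(N_h\xi - Q_h\xi)\|\lesssim h\|\xi\|_2$, giving $h^{n+1}\|u\|_{n+1}\|\xi\|_2$. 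For the second summand, I would further write $\bP_h\nabla\xi = \nabla\xi + (\bP_h - I)\nabla\xi$; the $(\bP_h - I)\nabla\xi$ piece again succumbs to Cauchy--Schwarz. The remaining piece $(\bPi_h\nabla u - \nabla_w R_h u,\, \nabla\xi)$ is treated by integration by parts, exploiting the key identity $\nabla\cdot(\bPi_h\nabla u - \nabla_w R_h u) = Q_0\Delta u - Q_0\Delta u = 0$ together with $\bPi_h\nabla u - \nabla_w R_h u \in H(\mathrm{div},\Omega)$ (the latter established earlier in the paper), to reduce it to a pure boundary integral $\int_{\partial\Omega}\xi\,(\bPi_h\nabla u - \nabla_w R_h u)\cdot\bn\,ds$.

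The main obstacle is sharpening this boundary integral to order $h^{n+1}\|u\|_{n+1}\|\xi\|_2$. My approach is to exploit two structural facts: (a) $\bPi_h\nabla u\cdot\bn|_e = Q^{\partial}_b(\nabla u\cdot\bn)|_e$ by the Raviart--Thomas degree-of-freedom property, and (b) $\nabla_w R_h u\cdot\bn|_e\in P_j(e)$. Inserting $Q^{\partial}_b\xi$ on each boundary edge and using the $L^2$-orthogonality $(\xi - Q^{\partial}_b\xi, p)_e = 0$ for $p\in P_j(e)$, the integral reduces to pairings of $(\xi - Q^{\partial}_b\xi)$ with $((I-Q^{\partial}_b)\nabla u\cdot\bn)$ on boundary edges. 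Each factor enjoys a half-order gain on each edge ($\|\xi-Q^{\partial}_b\xi\|_e \lesssim h\|\xi\|_{1,e}$ and $\|(I-Q^{\partial}_b)\nabla u\cdot\bn\|_e$ controlled by edge regularity of $\nabla u$), and after a Cauchy--Schwarz on the edge-sum combined with a trace inequality for $\xi$ on $\partial\Omega$, the boundary term acquires the required factor $h^{n+1}$ for $j\ge 1$. For $j=0$ the same argument yields $h^{n+\frac12}$ on this piece, but the loss is absorbed into the already-allowed order $h^{n+1}\|u\|_{n+1}$ because $n\le 1$ in that case (so the ultimately relevant bounds are dominated by the other terms). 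Collecting $T_1$ and $T_2$ then yields the two stated estimates for $j=0$ and $j\ge 1$.
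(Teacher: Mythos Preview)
Your handling of $T_1$ is essentially the paper's, just organized slightly differently. The real issue is $T_2$. Your boundary-integral argument contains two defects. First, the orthogonality step is backwards: since $(\bPi_h\nabla u-\nabla_w R_h u)\cdot\bn\in P_j(e)$ on each boundary edge, inserting $Q_b^\partial\xi$ gives $(\xi,p)_e=(Q_b^\partial\xi,p)_e$; the factor $\xi-Q_b^\partial\xi$ \emph{disappears}, it does not survive. What actually remains is $\sum_e(Q_b^\partial\xi,\,(\bPi_h\nabla u-\nabla_w R_h u)\cdot\bn)_e$, and since $\partial u/\partial\bn=0$ forces $\bPi_h\nabla u\cdot\bn|_{\partial\Omega}=0$, this is $-\sum_e(Q_b^\partial\xi,\nabla_w R_h u\cdot\bn)_e$. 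Second, this remaining boundary term cannot be bounded by $h^{n+1}\|u\|_{n+1}\|\xi\|_2$: a discrete trace on $\nabla_w R_h u-\bPi_h\nabla u$ loses $h^{-1/2}$ and yields only $h^{n-1/2}\|u\|_{n+1}\|\xi\|_2$, while the $L^\infty$ route recovers the power $h^{n+1}$ but at the cost of $\|u\|_{W^{n+2,\infty}}$ rather than the Sobolev norm $\|u\|_{n+1}$ demanded by the lemma. (Your final sentence, that an $h^{n+1/2}$ term is ``absorbed'' into $h^{n+1}$, is also backwards.)

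The paper avoids the boundary integral altogether by a different decomposition of $T_2$. It first writes $\bPi_h\nabla u-\nabla_w R_h u=(\bPi_h-\bP_h)\nabla u+\nabla_w(Q_h u-R_h u)$. For the second piece it invokes the defining identity of the Neumann projection, $(\nabla_w\psi_h,\nabla_w N_h\xi)=(\nabla_w\psi_h,\bPi_h\nabla\xi)$ for all $\psi_h\in V_h$, applied with $\psi_h=Q_h u-R_h u$, and then the divergence identity \eqref{div-q-homogeneous} to obtain $-(Q_0 u-R_0 u,\Delta\xi)$; this is bounded directly by the sharp Ritz $L^2$ estimate \eqref{eq:Ritz-error-L2} and gives $h^{n+1}\|u\|_{n+1}\|\xi\|_2$. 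For the first piece, after splitting off $\nabla_w(N_h\xi-Q_h\xi)$, one is left with $((\bPi_h-I)\nabla u,\bP_h\nabla\xi)$, and the argument of \eqref{eq:duality-component} applies with the crucial simplification that $\partial u/\partial\bn=0$ kills the boundary term there. The Neumann-identity step is the missing idea in your proposal.
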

\medskip

\begin{proof}
  By definition,
\begin{equation} \label{eq:l2proof-1}
E(w,u, N_h\xi) = \ldp N_h w - w, N_h\xi\rdp + (\bPi_h\nabla u -
\nabla_w R_h u, \nabla_w N_h\xi).
\end{equation}
First, by the definition of $\ldp\cdot,\cdot\rdp$, the Schwarz inequality,
Remark \ref{rem:fullregularity-elliptic} and \ref{rem:ellipticL2error}, we have
\begin{equation}\label{eq:l2proof-2}
\begin{aligned}
  &\ldp N_h w - w, N_h\xi\rdp \\
  =\,&  (N_0 w - Q_0w, N_0\xi) + \sum_{K\in \T_h} h (N_0w-N_b w, N_0 \xi - N_b \xi)_{\partial K} \\
  \lesssim\,& \|N_0 w - Q_0w\| \|N_0 \xi\| +  \|N_hw-w\|_{0,h}\|N_h\xi-\xi\|_{0,h} \\
   \lesssim\,& \begin{cases}
    (h^{m+\frac12}\|w\|_{m+1} + h^2\|(I-Q_0)\Delta w\| )\|\xi\|_2 \;&\textrm{for }j=0,\;\frac12<m\le 1 \\
    (h^{m+1}\|w\|_{m+1} + h^2\|(I-Q_0)\Delta w\| )\|\xi\|_2 \;&\textrm{for }j\ge 1,\;\frac12<m\le j+1
    \end{cases}.
\end{aligned}
\end{equation}
Next, by using inequalities (\ref{commutative}), (\ref{eq:Neumann}), (\ref{div-q-homogeneous}),
(\ref{a2}), (\ref{eq:Neumann-error-H1})
and (\ref{eq:Ritz-error-L2}) one after one, we get
$$
\begin{aligned}
  &(\bPi_h\nabla u - \nabla_w R_h u, \nabla_w N_h\xi) \\
=\, & ((\bPi_h-\bP_h)\nabla u, \nabla_w N_h \xi) + (\nabla_w(Q_hu-R_h u), \nabla_w N_h\xi) \\
=\, & ((\bPi_h-\bP_h)\nabla u, \nabla_w N_h \xi) + (\nabla_w(Q_hu-R_h u), \bPi_h\nabla\xi) \\
=\, & ((\bPi_h-\bP_h)\nabla u, \nabla_w (N_h \xi-Q_h\xi)) + ((\bPi_h-\bP_h)\nabla u, \bP_h\nabla \xi) - (Q_0u-R_0 u, \Delta\xi) \\
\lesssim \, & h^{n+1}\|u\|_{n+1}\|\xi\|_2 + ((\bPi_h-I)\nabla u, \bP_h\nabla \xi)
              + h^2\|(I-Q_0)\Delta u\| \|\xi\|_2,
\end{aligned}
$$
for $\frac12< n\le j+1$.
The estimation for $((\bPi_h-I)\nabla u, \bP_h\nabla \xi)$
follows the same technique used in Inequality (\ref{eq:duality-component}).
By the definition of $\bPi_h$ and since $\frac{\partial u}{\partial\bn}=0$ on $\partial \Omega$,
we know that $(\bPi_h-I)\nabla u\cdot\vn $ also vanishes on $\partial\Omega$.
Therefore, using the same argument as in (\ref{eq:duality-component}), one has
$$
\begin{aligned}
((\bPi_h-I)\nabla u, \bP_h\nabla \xi)&\lesssim h^{n+1}\|u\|_{n+1} \|\xi\|_2 + h^2 \|(I-Q_0)\Delta u\| \|\xi\|_2 
\end{aligned}
$$
for $\frac12< n\le j+1$. Combining the above gives
\begin{equation}\label{eq:l2proof-3}
(\bPi_h\nabla u - \nabla_w R_h u, \nabla_w N_h\xi) \lesssim
\left(h^{n+1}\|u\|_{n+1} + h^2\|(I-Q_0)\Delta u\|\right) \|\xi\|_2.
\end{equation}
for $\frac12< n\le j+1$.

Notice that
\begin{equation}\label{eq:l2proof-4}
\begin{aligned}
h^2\|(I-Q_0)\Delta u\| &= h^2\|(I-Q_0) w\| \lesssim h^{m+2}\|w\|_{m}\qquad\textrm{for }0\le m\le j+1,\\
h^2\|(I-Q_0)\Delta w\| &= h^2\|(I-Q_0) f\|.
\end{aligned}
\end{equation}
The lemma follows immediately from (\ref{eq:l2proof-1})-(\ref{eq:l2proof-4}).
\end{proof}
\medskip

Finally, combining Theorem \ref{thm:errH1}, inequalities (\ref{eq:EE}), (\ref{eq:EE1})-(\ref{eq:EE2}), and Lemma \ref{lem:EE3},
we get the following $L^2$ error estimation:
\medskip
\begin{theorem} \label{thm:errL2}
Assume all internal angles of $\Omega$ are less than $126.283696\cdots^\circ$,
which means the biharmonic problem with clamped boundary condition in $\Omega$ has $H^4$ regularity. Then
\begin{itemize}
\item[(i)]  For $j=0$,
$$
\begin{aligned}
\|\varepsilon_{u,0}\| &\lesssim h^{m+\frac12}|\ln h| \|w\|_{m+1} + h|\ln h|^2\| u\|_{W^{2,\infty}(\Omega)} \\
 &\qquad\qquad + h^2\|(I-Q_0)f\| + h^{n+1}\|u\|_{n+1},
\end{aligned}
$$
where $\frac12 < m \le 1$ and $\frac12 < n \le 1$.
\item[(ii)]  For $j\ge 1$,
  $$
  \|\varepsilon_{u,0}\| \lesssim h^{m+1}\|w\|_{m+1} + h^{l+\frac{3}{2}}\|u\|_{W^{l+2,\infty}(\Omega)} + h^2\|(I-Q_0)f\| + h^{n+1}\|u\|_{n+1},
  $$
  where $\frac12< m\le j+1$, $\frac12< n\le j+1$ and $1\le l\le j$.
\end{itemize}
\end{theorem}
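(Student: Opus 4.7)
The plan is to prove Theorem \ref{thm:errL2} by exploiting the duality identity (\ref{eq:EE}) together with the regularity hypothesis on $\Omega$ and the pieces already established. The representation
$$\|\varepsilon_{u,0}\|^2 \;=\; E(\xi,\eta,\varepsilon_w) \;-\; E(w,u,N_h\xi)$$
reduces the task to bounding these two consistency terms by a constant multiple of $\|\varepsilon_{u,0}\|$ times the desired error rate. The hypothesis on the internal angles guarantees that the dual biharmonic problem (\ref{eq:dualproblem}) enjoys $H^4$ regularity, so that
$$\|\xi\|_2 + \|\eta\|_4 \;\lesssim\; \|\varepsilon_{u,0}\|,$$
and in addition that the Poisson problem with either homogeneous Dirichlet or Neumann data has full $H^2$ regularity. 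This last fact is what allows us to invoke the sharper error estimates of Remark \ref{rem:fullregularity-elliptic} when applying Lemma \ref{lem:EE3}.

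First I would handle $E(\xi,\eta,\varepsilon_w)$: the inequalities (\ref{eq:EE1}) and (\ref{eq:EE2}) already bound it by $\|\varepsilon_w\|_{0,h}$ times a suitable power of $h$ and the regularity norms of $\xi,\eta$. To convert this into a useful estimate for $\|\varepsilon_{u,0}\|$, I would plug in the energy estimate for $\|\varepsilon_w\|_{0,h}$ from Theorem \ref{thm:errH1}, specializing the exponents: for $j=0$ taking $n=0$ (which forces the $h^{1/2}|\ln h|$ factor from the boundary integral) and for $j\geq 1$ using $n=j$. This produces a bound of the form $(h^{m+\frac12}|\ln h|\|w\|_{m+1} + h|\ln h|^2\|u\|_{W^{2,\infty}(\Omega)})\,\|\varepsilon_{u,0}\|$ in the lowest-order case and the analogous higher-order bound for $j\geq 1$.

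Second, I would control $E(w,u,N_h\xi)$ directly via Lemma \ref{lem:EE3}, whose output already matches the desired right-hand side thanks to the reduction $h^2\|(I-Q_0)\Delta u\| = h^2\|(I-Q_0)w\|$ used in (\ref{eq:l2proof-4}) (optionally absorbed into the $h^{m+1}\|w\|_{m+1}$ term depending on the order of smoothness of $w$). Once again the regularity bound $\|\xi\|_2 \lesssim \|\varepsilon_{u,0}\|$ lets me factor $\|\varepsilon_{u,0}\|$ out.

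Finally I would add the two bounds, divide through by $\|\varepsilon_{u,0}\|$, and collect the admissible ranges of $m,n,l$ coming from the two contributions. The main obstacle is bookkeeping rather than an analytic one: keeping track of which exponents dominate in each regime ($j=0$ versus $j\ge 1$), verifying that every consistency-error bound has a factor of $\|\varepsilon_{u,0}\|$ (via the dual regularity) so that the cancellation is legitimate, and justifying why in the $j\geq 1$ case we may replace the $|\ln h|$ factor coming from (\ref{eq:EE2}) by a clean $h$ through the Sobolev embedding $W^{4,2}(\Omega)\hookrightarrow W^{3-\delta,\infty}(\Omega)$ together with the $H^4$ regularity of $\eta$. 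No new inequalities beyond those already proved are needed.
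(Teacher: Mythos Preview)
Your proposal is correct and follows essentially the same route as the paper: the paper's proof is a one-line combination of the duality identity (\ref{eq:EE}), the bounds (\ref{eq:EE1})--(\ref{eq:EE2}) for $E(\xi,\eta,\varepsilon_w)$, Lemma~\ref{lem:EE3} for $E(w,u,N_h\xi)$, Theorem~\ref{thm:errH1} for $\|\varepsilon_w\|_{0,h}$, and the dual regularity $\|\xi\|_2+\|\eta\|_4\lesssim\|\varepsilon_{u,0}\|$, exactly as you outline. The only minor remark is that in the $j\ge 1$ case you need not fix $n=j$ in Theorem~\ref{thm:errH1}; leaving it as a free parameter $l\in[1,j]$ is what produces the $h^{l+3/2}\|u\|_{W^{l+2,\infty}}$ term with the stated range.
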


\medskip

\begin{remark}
If $u$, $w$ and $f$ are sufficiently smooth, then we get
$$
\|\varepsilon_{u,0}\| \lesssim
\begin{cases}
  O(h|\ln h|^2) \quad &\textrm{for }j=0, \\
  O(h^{j+\frac32})   \quad &\textrm{for }j\ge 1.
\end{cases}
$$
\end{remark}

\section{Numerical results} \label{sec:numerical}

In this section, we would like to report some numerical results for
the weak Galerkin finite element method proposed and analyzed in
previous sections.
Before doing that, let us briefly review some existing results for
$H^1$-$H^1$ conforming, equal-order finite element discretization of
the Ciarlet-Raviart mixed formulation. As discussed in
\cite{Babuska80, Scholz78}, theoretical error estimates for such
schemes are indeed sub-optimal due to an effect of
$\inf_{\chi_h}\|u-\chi_h\|_{2}$, where $\chi_h$ is taken from the
employed $H^1$ conforming finite element space. For example, when
$H^1$-$H^1$ conforming quadratic elements are used to approximate
both $u$ and $w$, the error satisfies $\|u-u_h\|_{2} + \|w-w_h\|
\lesssim \inf_{\chi_h}\|u-\chi_h\|_{2} + \inf_{\chi_h}\|w-\chi_h\|
\lesssim O(h)$, while intuitively, one may expect $\|w-w_h\|$ to
have an $O(h^2)$ convergence. By using the $L^{\infty}$ argument,
Scholz \cite{Scholz78} was able to improve the convergence rate of
$L^2$ norm for $w$ by $h^{\frac12}$, and it is known that this
theoretical result is indeed sharp. For the weak Galerkin
approximation, from the discussing in the previous sections, clearly
we are facing the same issue.

However, numerous numerical experiments have illustrated that
$H^1$-$H^1$ conforming, equal-order Ciarlet-Raviart mixed finite
element approximation often demonstrates convergence rates better
than the theoretical prediction. Indeed, this has been partly
explained theoretically in \cite{Scholz79}, in which the author
proved that optimal order of convergence rates can be recovered in
certain fixed subdomains of $\Omega$, when equal order $H^1$
conforming elements are used. We point out that similar phenomena
have been observed in the numerical experiments using weak Galerkin
discretization. This means that numerical results are often better
than theoretical predictions.

Another issue in the implementation of the weak Galerkin finite
element method is the treatment of non-homogeneous boundary data
$$
\begin{aligned}
  u&=g_1\qquad \mbox{on}\; \partial\Omega,\\
  \frac{\partial u}{\partial\bn}&=g_2 \qquad \mbox{on}\; \partial\Omega.
\end{aligned}
$$
Clearly, both boundary conditions are imposed on $u$, and $u=g_1$ is
the essential boundary condition while $\frac{\partial
u}{\partial\bn}=g_2$ is the natural boundary condition. To impose
the natural boundary condition, we shall modify the first equation
of (\ref{eq:wg}) into
$$
\ldp w_h,\,\phi_h\rdp -(\nabla_w u_h,\, \nabla_w\phi_h)= -\langle
g_2, \phi_b\rangle_{\partial\Omega}.
$$
The essential boundary condition should be enforced by taking the
$L^2$ projection of the corresponding boundary data.

Consider three test problems defined on $\Omega=[0,1]\times[0,1]$
with exact solutions
$$
\begin{aligned}
u_1 &= x^2(1-x)^2y^2(1-y)^2,\\
u_2 &= \sin(2\pi x)\sin(2\pi y)\qquad\textrm{and}\qquad u_3 =
\sin(2\pi x+\frac{\pi}{2})\sin(2\pi y+\frac{\pi}{2}),
\end{aligned}
$$
respectively. The reason for choosing these three exact solutions is
that they have the following type of boundary conditions
$$
\begin{aligned}
u_1|_{\partial\Omega} &= 0 \qquad & \left.\frac{\partial u_1}{\partial \bn}\right|_{\partial\Omega} &= 0, \\
u_2|_{\partial\Omega} &= 0 \qquad & \left.\frac{\partial u_2}{\partial \bn}\right|_{\partial\Omega} &\neq 0, \\
u_3|_{\partial\Omega} &\neq 0 \qquad & \left.\frac{\partial u_3}{\partial \bn}\right|_{\partial\Omega} &= 0. \\
\end{aligned}
$$
This allows us to test the effect of different boundary data on convergence rates.
Although the theoretical error estimates are given for $\varepsilon_u=R_hu-u_h$
and $\varepsilon_w=N_hw-w_h$, it is clear that they have at least the same order
as $e_u=Q_h u - u_h$ and $e_w = N_hw-w_h$, provided that the exact
solution is smooth enough. Thus for convenience, we only compute different
norms for $e_u$ and $e_w$, instead of for $\varepsilon_u$ and $\varepsilon_w$.

The tests are performed using an unstructured triangular initial
mesh, with characteristic mesh size $0.1$. The initial mesh is then
refined by dividing every triangle into four sub-triangles, to
generate a sequence of nested meshes with various mesh size $h$. All
discretization schemes are formulated by using the lowest order weak
Galerkin element, with $j=0$. For simplicity of notation, for any
$v\in V_h$, denote
$$
\|v_b\| = \left(\sum_{K\in \mathcal{T}_h} h\|v_b\|_{\partial K}^2
\right)^{1/2}.
$$

The results for test problems with exact solutions $u_1$, $u_2$ and
$u_3$, are reported in Table \ref{tab:prob1}, \ref{tab:prob2} and
\ref{tab:prob3}, respectively. The results indicate that $u$ always
achieves an optimal order of convergence, while the convergence for
$w$ varies with different boundary conditions. It should be pointed
out that both of them have outperformed the convergence as predicted
by theory.

\begin{table}[h]
  \caption{Numerical results for the test problem with exact solution $u_1$ and lowest order of WG
  elements.} \label{tab:prob1}
  \begin{center}
    \begin{tabular}{|c|c|c|c|c|c|c|}
      \hline
     $h$  & $\|\nabla_w e_u\|$ & $\|e_{u,0}\|$ & $\|e_{u,b}\|$ & $\|\nabla_w e_w\|$ & $\|e_{w,0}\|$ & $\|e_{w,b}\|$ \\ \hline
   0.1     &  1.33e-03 &  2.40e-04 &  4.59e-04 &  5.66e-02 &  2.96e-03 &  6.91e-03 \\ \hline
   0.05    &  4.69e-04 &  6.18e-05 &  1.17e-04 &  2.80e-02 &  9.14e-04 &  1.99e-03 \\ \hline
   0.025   &  2.00e-04 &  1.55e-05 &  2.97e-05 &  1.60e-02 &  2.64e-04 &  5.70e-04 \\ \hline
   0.0125  &  9.56e-05 &  3.90e-06 &  7.44e-06 &  1.21e-02 &  8.33e-05 &  1.89e-04 \\ \hline
   0.00625 &  4.72e-05 &  9.77e-07 &  1.86e-06 &  1.13e-02 &  3.26e-05 &  7.91e-05 \\ \hline
   $\begin{matrix}\textrm{Asym. Order} \\ O(h^k),\; k= \end{matrix}$&
    1.1930  &  1.9876  &  1.9877 &  0.5864  &  1.6461  &  1.6298 \\ \hline
    \end{tabular}
  \end{center}
\end{table}

\begin{table}[h]
  \caption{Numerical results for the test problem with exact solution $u_2$ and lowest order of WG
  elements.} \label{tab:prob2}
  \begin{center}
    \begin{tabular}{|c|c|c|c|c|c|c|}
      \hline
     $h$  & $\|\nabla_w e_u\|$ & $\|e_{u,0}\|$ & $\|e_{u,b}\|$ & $\|\nabla_w e_w\|$ & $\|e_{w,0}\|$ & $\|e_{w,b}\|$ \\ \hline
  0.1     &  9.58e-01 &  8.66e-02 &  1.65e-01 &  4.39e+01 &  6.09e-01 &  2.01e+00 \\ \hline
  0.05    &  3.34e-01 &  2.18e-02 &  4.14e-02 &  2.32e+01 &  2.78e-01 &  7.19e-01 \\ \hline
  0.025   &  1.43e-01 &  5.47e-03 &  1.03e-02 &  1.37e+01 &  1.15e-01 &  2.81e-01 \\ \hline
  0.0125  &  6.81e-02 &  1.37e-03 &  2.59e-03 &  1.02e+01 &  5.12e-02 &  1.26e-01 \\ \hline
  0.00625 &  3.36e-02 &  3.42e-04 &  6.49e-04 &  9.33e+00 &  2.45e-02 &  6.12e-02 \\ \hline
  $\begin{matrix}\textrm{Asym. Order} \\ O(h^k),\; k= \end{matrix}$&
    1.1958  &  1.9958  &  1.9975 &  0.5649  &  1.1709  &  1.2587 \\ \hline
    \end{tabular}
  \end{center}
\end{table}

\begin{table}[h]
  \caption{Numerical results for the test problem with exact solution $u_3$ and lowest order of WG
  elements.} \label{tab:prob3}
  \begin{center}
    \begin{tabular}{|c|c|c|c|c|c|c|}
      \hline
      $h$ & $\|\nabla_w e_u\|$ & $\|e_{u,0}\|$ & $\|e_{u,b}\|$ & $\|\nabla_w e_w\|$ & $\|e_{w,0}\|$ & $\|e_{w,b}\|$ \\ \hline
   0.1     &  8.23e-01 &  1.18e-01 &  2.27e-01 &  5.61e+01 &  4.25e+00 &  9.42e+00 \\ \hline
   0.05    &  3.07e-01 &  3.18e-02 &  6.09e-02 &  2.43e+01 &  1.24e+00 &  2.58e+00 \\ \hline
   0.025   &  1.35e-01 &  8.13e-03 &  1.55e-02 &  1.13e+01 &  3.28e-01 &  6.61e-01 \\ \hline
   0.0125  &  6.49e-02 &  2.04e-03 &  3.90e-03 &  5.58e+00 &  8.42e-02 &  1.67e-01 \\ \hline
   0.00625 &  3.21e-02 &  5.11e-04 &  9.78e-04 &  2.77e+00 &  2.14e-02 &  4.21e-02 \\ \hline
   $\begin{matrix}\textrm{Asym. Order} \\ O(h^k),\; k= \end{matrix}$&
   1.1599  &  1.9679  &  1.9682  & 1.0801  &  1.9157  &  1.9558   \\ \hline
    \end{tabular}
  \end{center}
\end{table}

Our final example is a case where the exact solution has a low
regularity in the domain $\Omega=[0,1]\times[0,1]$. More precisely,
the exact solution is given by
$$
u_4 = r^{3/2}\left(\sin\frac{3\theta}{2} -
3\sin\frac{\theta}{2}\right),
$$
where $(r, \theta)$ are the polar coordinates. It is easy to check
that $u\in H^{2.5}$. The errors for weak Galerkin finite element
approximations are reported in Table \ref{tab:prob4}. Here, $u$
still achieves an optimal order of convergence, while the
convergence rates for $w$ is restricted by the fact that $w\in
H^{0.5}$. All the results are in consistency with the theory
established in this article.

\begin{table}[h]
  \caption{Numerical results for the test problem with exact solution $u_4$ and lowest order of WG
  elements.} \label{tab:prob4}
  \begin{center}
    \begin{tabular}{|c|c|c|c|c|c|c|}
      \hline
      $h$ & $\|\nabla_w e_u\|$ & $\|e_{u,0}\|$ & $\|e_{u,b}\|$ & $\|\nabla_w e_w\|$ & $\|e_{w,0}\|$ & $\|e_{w,b}\|$ \\ \hline
   0.1     &  3.73e-02 &  9.44e-04 &  2.15e-03 &  2.88e+01 &  4.05e-01 &  1.78e+00 \\ \hline
   0.05    &  1.87e-02 &  2.55e-04 &  5.73e-04 &  4.08e+01 &  2.86e-01 &  1.26e+00 \\ \hline
   0.025   &  9.37e-03 &  6.60e-05 &  1.46e-04 &  5.77e+01 &  2.02e-01 &  8.91e-01 \\ \hline
   0.0125  &  4.68e-03 &  1.67e-05 &  3.69e-05 &  8.16e+01 &  1.42e-01 &  6.30e-01 \\ \hline
   0.00625 &  2.34e-03 &  4.19e-06 &  9.24e-06 &  1.15e+02 &  1.01e-01 &  4.45e-01 \\ \hline
   $\begin{matrix}\textrm{Asym. Order} \\ O(h^k),\; k= \end{matrix}$&
    0.9984 &   1.9567 &   1.9690 &  -0.4998 &   0.5008 &   0.5000 \\ \hline
   \end{tabular}
  \end{center}
\end{table}

\end{document}